\documentclass[a4paper,oneside,11pt]{article}

\addtolength\topmargin{-.5in}
\addtolength\textheight{1.in}
\addtolength\oddsidemargin{-.045\textwidth}
\addtolength\textwidth{.09\textwidth}

\tolerance=1414
\setlength\emergencystretch{1.5em}
\hbadness=1414
\setlength\hfuzz{.3pt}
\widowpenalty=10000
\raggedbottom
\setlength\vfuzz{.3pt}

\usepackage[utf8]{inputenc}
\usepackage[T1]{fontenc}
\usepackage{textcomp}
\usepackage[english]{babel}
\usepackage[autolanguage]{numprint}
\usepackage{hyperref}
\usepackage{graphicx}
\usepackage{verbatim}

\usepackage{amsmath,amssymb,amsthm}

\renewcommand\[{\begin{equation}}\renewcommand\]{\end{equation}}
\renewcommand\epsilon\varepsilon
\renewcommand\phi\varphi
\renewcommand\geq\geqslant
\renewcommand\leq\leqslant
\renewcommand\ln\log

\newcommand\RR{\mathbb{R}}

\newcommand\ab\allowbreak

\theoremstyle{definition}

\theoremstyle{plain}

\newtheorem{thm}{Theorem}[section]
\newtheorem{defn}[thm]{Definition}
\newtheorem{prop}[thm]{Proposition}
\newtheorem{lem}[thm]{Lemma}
\newtheorem{rque}{Remark} [section]
\newtheorem{cor}[thm]{Corollary}

\newtheorem{rmq}{Remark}[section]

\DeclareMathOperator{\Ent}{Ent}

\newcommand{\R}{\mathbb{R}}
\newcommand{\N}{\mathbb{N}}

\newcommand{\E}{\mathbb{E}}
\newcommand{\tQ}{\widetilde{Q}}
\newcommand{\tW}{\widetilde{W}}
\newcommand{\cX}{\mathcal{X}}
\newcommand{\cP}{\mathcal{P}}
\newcommand{\I}{\mathcal{I}}
\begin{document}

\title{Curvature and transport inequalities for Markov chains in discrete spaces}
\date{\today}

\author{Max Fathi
\thanks{University of California, Berkeley, United States, mfathi@phare.normalesup.org. Supported in part by NSF FRG grant DMS-1361185.}
 and Yan Shu
\thanks{Universit\'e Paris-Ouest Nanterre, France, yshu@u-paris10.fr}
}
\maketitle

\begin{abstract}
We study various transport-information inequalities under three different notions of Ricci curvature in the discrete setting: the curvature-dimension condition of Bakry and \'Emery \cite{BE85}, the exponential curvature-dimension condition of Bauer \textit{et al.} \cite{BHLLMY} and the coarse Ricci curvature of Ollivier \cite{Oll2009}. We prove that under a curvature-dimension condition or coarse Ricci curvature condition, an $L_1$ transport-information inequality holds; while under an exponential curvature-dimension condition, some weak-transport information inequalities hold. As an application, we establish a Bonnet-Meyer's theorem under the curvature-dimension condition CD$(\kappa,\infty)$ of Bakry and \'Emery \cite{BE85}.
\end{abstract}

\section{Introduction}

In the analysis of the geometry of Riemannian manifolds, Ricci curvature plays an important role. In particular, Ricci curvature lower bounds immediately yield powerful functional inequalities, such as the logarithmic Sobolev inequality, which in turn implies transport-entropy and transport-information inequalities.
 Each of these inequalities has its own interest and has various applications, such as concentration bounds and estimates on the speed of convergence to equilibrium for Markov chains. We refer the reader to \cite{BGL14,GLWW2,Vi08} for more about the links between curvature and functional inequalities, and to \cite{An00,Led} for applications of functional inequalities.

However, when the space we consider is a graph, those theories are not as clear as in the continuous settings. The first question one would want to answer is how to define Ricci curvature lower bounds in discrete settings. The natural approach would be to define it as a discrete analogue of a definition valid in the continuous setting. There are several equivalent definitions one can try to use (see \cite{AGS15} for those definitions in the continuous settings and for the equivalences between them). However, in discrete spaces, we lose the chain rule, and these definitions are no longer equivalent.

Several notions of curvature have been proposed in the last few years. Here we shall consider three of them: the curvature-dimension condition of Bakry and \'Emery \cite{BE85}, the exponential curvature-dimension condition of Bauer \textit{et al.} \cite{BHLLMY} and the coarse Ricci curvature of Ollivier \cite{Oll2009}. Other notions that have been developed (and which we shall not discuss further here) include the entropic Ricci curvature defined by Erbar and Maas in \cite{EM12} and Mielke in \cite{Mi13}, which is based on the Lott-Sturm-Villani definition of curvature \cite{LV09,St06}, geodesic convexity along interpolations in \cite{GRST2014} and \cite{L2014}, rough curvature bounds in \cite{BS09}. It is still an open problem to compare these various notions of curvature. We refer readers to the forthcoming survey \cite{CGMPRSST} for a more general introduction.

The aim of this work is to obtain functional inequalities under the above three notions of curvature conditions and give some applications.

Let us begin with setting the framework of Markov chains on discrete spaces:

\subsection*{Markov chain on graphs and curvature condition}
Let $\cX$ be a finite (or countably infinite) discrete space and $K$ be an irreducible Markov kernel on $\cX$. Assume that for any $x\in \cX$, we have
\begin{equation} \label{assumpt_markov}
\underset{y}{\sum} \hspace{1mm} K(x,y) = 1.
\end{equation}
This condition is a normalization of the time scale, enforcing that jump attempts occur at rate $1$. We also define $J(x):=1-K(x,x)$ and $J:=\sup_{x\in \cX}J(x)$. $J$ is a measure of the laziness of the chain, estimating how often jump attempts end with the particle not moving. Since we assume the kernel is irreducible, $0 < J \leq 1$.
\newline
We shall always assume there exists a reversible invariant probability measure $\pi$, satisfying the detailed balance relation
$$K(x,y)\pi(x) = K(y,x)\pi(y) \hspace{5mm} \forall x, y \in \cX.$$
We denote by $L$ the generator of the continuous-time Markov chain associated to the kernel $K$, which is given by
$$Lf(x) = \underset{y}{\sum} (f(y) - f(x))K(x,y).$$
Let $P_t = e^{tL}$ be the associated semigroup, acting on functions, and $P_t^*$ its adjoint, acting on measures. We also define the $\Gamma$ operator, given by
$$\Gamma(f,g)(x) := \frac{1}{2}\underset{y}{\sum} \hspace{1mm}(f(y)-f(x))(g(y) - g(x))K(x,y)$$
and write $\Gamma(f) := \Gamma(f,f)$.

With this $\Gamma$ operator, we are able to introduce the Bakry-\'Emery curvature condition CD$(\kappa$, $\infty)$:

\begin{defn}
We define the iterated $\Gamma$ operator $\Gamma_2$ as
$$\Gamma_2(f) = \frac{1}{2}L\Gamma(f) - \Gamma(f, Lf).$$
We say that the curvature condition CD$(\kappa$, $\infty)$ is satisfied if, for all functions $f$, we have
$$\Gamma_2(f) \geq \kappa \Gamma(f).$$
\end{defn}

Since we shall deal with three different types of Ricci curvature lower bounds, in order to avoid confusions, we always denote $\kappa$ for the Bakry-\'Emery curvature condition, $\kappa_e$ for the exponential curvature dimension condition and $\kappa_c$ for a lower bound on the Coarse Ricci curvature. Throughout the paper, $\kappa,\kappa_e, \kappa_c$ will always be positive numbers.

The $\Gamma_2$ operator and the curvature condition were first introduced in \cite{BE85}, and used to prove functional inequalities, such as logarithmic Sobolev inequalities and Poincar\'e inequalities, for measures on Riemannian spaces satisfying CD($\kappa$, $\infty$) for $\kappa > 0$. In the Riemannian setting, the $\Gamma_2$ operator involves the Ricci tensor of the manifold, and the condition CD$(\kappa$, $\infty)$ is equivalent to asking for lower bounds on the Ricci curvature, and more generally to the Lott-Sturm-Villani definition of lower bounds on Ricci curvature (see \cite{LV09} and \cite{St06} for the definition, and \cite{AGS15} for the equivalence between the two notions). Hence CD$(\kappa$, $\infty)$ can be used as a \emph{definition} of lower bounds on the Ricci curvature for nonsmooth spaces, and even discrete spaces. This was the starting point of a very fruitful direction of research on the links between curvature and functional inequalities. In most cases, the focus was on the continuous setting, and the operator $L$ was assumed to be a diffusion operator.

In the discrete setting, this curvature condition was first studied by Schmuckenshlager in \cite{Sc} and then by S.-T. Yau and his collaborators in \cite{BHLLMY, BLLY,CLY,LY10}. It has also been used in \cite{KKRT14}, where a discrete version of Buser's inequality was obtained, as well as curvature bounds for various graphs, such as abelian Cayley graphs and slices of the hypercube. Note that most of these works are set in the framework of graphs rather than Markov chains, which generally makes our definitions and theirs differ by a normalization constant, since we enforce the condition \eqref{assumpt_markov}.

As we have mentioned, the main differences between the continuous and discrete settings is the when the operator $L$ is not a diffusion operator, we lose the chain rule. This leads to additional difficulties, and some results, such as certain forms of the logarithmic Sobolev inequality, do not seem to hold anymore. On the other hand, one of the main difficulties in the continuous setting is to exhibit an algebra of smooth functions satisfying certain conditions, while this property immediately holds in the discrete setting.

The key chain rule used in the continuous setting is the identity
$$L(\Phi(f)) = \Phi'(f)Lf + \Phi''(f)\Gamma(f)$$
which characterizes diffusion operators in the continuous setting, and does not hold in discrete settings. However, a key observation of \cite{BHLLMY} is that when $\Phi(x) = \sqrt{x}$, the identity
$$2\sqrt{f}L\sqrt{f} = Lf - 2\Gamma(\sqrt{f})$$
holds, even in the discrete setting. This observation motivated the introduction of a modified version of the curvature-dimension condition, designed to exploit this identity:

\begin{defn}
We define the modified $\Gamma_2$ operator $\tilde{\Gamma}_2$ as
$$\tilde{\Gamma}_2(f,f) := \Gamma_2(f) - \Gamma \left(f, \frac{\Gamma(f)}{f}\right).$$
We say that the exponential curvature condition CDE'($\kappa_e$, $\infty$) is satisfied if, for all nonnegative functions $f$ and all $x \in \mathcal{X}$, we have
$$\tilde{\Gamma}_2(f)(x) \geq \kappa_e \Gamma(f)(x).$$
\end{defn}

\begin{rmq}
We use the notation CDE'($\kappa_e$, $\infty$) to agree with the notations of \cite{BHLLMY}, where they also consider the case when the condition is only satisfied at points $x$ where $L f(x) < 0$.

In \cite{Mu15}, it is shown that CDE'$(\kappa_e$, $\infty)$ implies CD$(\kappa$, $\infty)$ with $\kappa=\kappa_e$. When the operator $L$ is a diffusion, the conditions CDE'$(\kappa_e$, $\infty)$ and CD$(\kappa$, $\infty)$ are equivalent.
\end{rmq}

Under this notion of curvature, Bauer et al. prove in \cite{BHLLMY} various Li-Yau inequalities on graphs, and then deduce heat kernel estimates and a Buser inequality for graphs. In \cite{BLLY}, it was shown that the CDE'($\kappa_e$, $\infty$) condition tensorizes, and that the associated heat kernel satisfies some Gaussian bounds.

The third notion of curvature we shall now introduce is the coarse Ricci curvature. In order to define it, we first need to introduce Wasserstein distances.

Let $d$ be a distance on $\cX$. The $L^p$ Wasserstein distance is defined as following:
\begin{defn}[$L^p$-Wasserstein distances]
Let $p \geq 1$. The $L^p$-Wasserstein distance $W_p$ between two probability measures $\mu$ and $\nu$ on a metric space $(\cX, d)$ is defined as
$$W_p(\mu, \nu) := \left(\underset{\pi}{\inf} \hspace{1mm} \int{d(x,y)^p\pi(dxdy)} \right)^{1/p}$$
where the infimum runs over all couplings $\pi$ of $\mu$ and $\nu$.
\end{defn}

Finally, we recall the definition of Coarse Ricci curvature, which has been introduced by Ollivier in \cite{Oll2009} for discrete-time Markov chains. Since we shall work in continuous time, we shall give the appropriate variant, introduced in \cite{J07}. Previous works considering contraction rates in transport distance include \cite{D70,Sa05,Ol2011}. Applications to error estimates for Markov Chain Monte Carlo were studied in \cite{JO2010}. The continuous-time version we use here was introduced in \cite{J07}. The particular case of curvature on graphs has been studied in \cite{JL14}.

\begin{defn}[Coarse Ricci curvature]
The coarse Ricci curvature of the Markov chain is said to be bounded from below by $\kappa_c$ if, for all probability measures $\mu$ and $\nu$ and any time $t \geq 0$, we have
$$W_1(P_t^*\mu, P_t^*\nu) \leq \exp(-\kappa_c t)W_1(\mu, \nu),$$
i.e. if it is a contraction in $W_1$ distance, with rate $\kappa_c$.
\end{defn}

Note that unlike the CD($\kappa$, $\infty$) condition, this property does not only depend on the Markov chain, but also on the choice of the distance $d$.

In this work, there are two distances we shall be interested in. In the rest of the paper, we write $d_g$ for the \emph{graph distance} associated to the Markov kernel. If we consider $\cX$ as the set of vertices of a graph, with edges between all pairs of vertices $(x,y)$ such that $K(x,y) > 0$, $d_g$ shall be the usual graph distance. More formally, it is defined as
$$d_g(x,y) := \inf \{n \in \N; \exists x_0,..,x_n | x_0 = x, \hspace{1mm} x_n = y, \hspace{1mm} K(x_i, x_{i+1}) > 0 \hspace{2mm} \forall 0 \leq i \leq n-1 \}.$$

In section \ref{sect_CD}, we shall also consider the distance $d_{\Gamma}$, defined by
$$d_{\Gamma}(x,y) = \underset{f; \Gamma(f) \leq 1}{\sup} \hspace{1mm} f(x) - f(y).$$

The main property that makes $d_{\Gamma}$ interesting is that the 1-lipschitz functions for $d_{\Gamma}$ are automatically characterized as the functions $f$ such that $\Gamma(f) \leq 1$. In this case, we denote $W_{p,d_\Gamma}$ instead of $W_p$ for the Wasserstein distance of space $(\cX,d_\Gamma)$. One reason to consider this distance is that $d_\Gamma$ is the exact analog of the classical situation for continuous metric space, where $\Gamma(f) = |\nabla f|^2$. On the other hand, observe that $\Gamma(d(x,\cdot)) \leq \frac{J(x)}{2}$ holds for all $x$. It follows that
 \begin{equation}\label{d_and_d_gamma}
d_g(x,.) \leq \sqrt{\frac{J(x)}{2}}d_{\Gamma}(x,.).
\end{equation}
Thus for all $x,y\in \cX$,
\begin{equation}\label{d_and_d_gamma2}
d_g(x,y) \leq \min\{\sqrt{\frac{J(x)}{2}},\sqrt{\frac{J(y)}{2}}\}d_{\Gamma}(x,y).
\end{equation}
Therefore the estimates on $d_{\Gamma}$ are stronger than estimates on $d_g$ with a constant $\sqrt{\frac{J}{2}}$ which is bounded by $\sqrt{\frac{1}{2}}$. Of course, this also means that functional inequalities involving $d$ shall be easier to obtain, at least in some situations.

\subsection*{Functional inequalities}
Now we turn to functional inequalities on graphs:

\begin{defn}(Fisher information)
Let $f$ be a nonnegative function defined on $\cX$. Define the Fisher information $I_{\pi}$ of $f$ with respect to $\pi$ as
$$\I_{\pi}(f) := 4\int \Gamma(\sqrt{f})d\pi= 2\sum_{x\in cX}\sum_{y\in \cX} (\sqrt{f(y)}-\sqrt{f(x)})^2K(x,y)\pi(x).$$
The factor 4 in this definition comes from the analogy with the continuous setting, where $$4\int{|\nabla \sqrt{f}|^2d\pi} = \int{|\nabla \log f|^2fd\mu}.$$
In the continuous setting, the Fisher information can be written as $\int{\nabla \log f \cdot \nabla f d\pi}$, so we can define a modified Fisher information as
\begin{equation}
\tilde{\I}_{\pi}(f) := \int{\Gamma(f, \log f)d\pi},
\end{equation}
which corresponds to the entropy production functional of the Markov chain.

There is a third way to rewrite the Fisher information for the continuous settings as $\int{\frac{|\nabla f|^2}{f}d\mu}$, and one can also define another modified Fisher information as
$$\overline\I_{\pi}(f) :=\int \frac{\Gamma(f)}{f}d\pi.$$
\end{defn}

Of course, there are many other ways to re-write the Fisher information in the continuous setting, each leading to a different definition in the discrete setting. We only stated here the three versions we shall use in this work.

In the discrete setting, $\I_\pi(f)$, $\tilde{\I}_\pi(f)$ and $\overline\I_{\pi}(f)$ are not equal in general. It is easy to see that $\I_\pi(f) \leq \tilde{\I}_\pi(f)$ and $\I_\pi(f) \leq \overline\I_{\pi}(f)$
If $f$ is the density function of a probability measure $\nu$ with respect to $\pi$, since $(\sqrt{f(y)}-\sqrt{f(x)})^2\leq f(x)+f(y)$, and since $\pi$ is reversible, one can deduce that
$$\I_{\pi}(f)\leq 2\sum_{x\in \cX}\sum_{y\in \cX} \left(f(x)+f(y)\right)K(x,y)\pi(x)\leq 4J.$$
Here we can see that in discrete settings, the Fisher information is in fact bounded from above, which is not true in continuous settings.

Let us recall the definition of the relative entropy $\Ent_{\pi}$ as well:
\begin{defn}(Relative entropy)
Assuming that $f$ is a nonnegative function on $\cX$, we define the relative entropy $f$ with respect to $\pi$ as following:
$$\Ent_{\pi}(f) := \sum_\cX f(x)\log f(x) \pi(x)- \sum_\cX f(x)\pi(x)\log \left(\sum_\cX f(x)\pi(x)\right).$$
Note that when $f$ is a probability density with respect to $\pi$, the second term takes value $0$.
\end{defn}

%

\begin{defn}\label{deffunctionalinequality}
Let $\pi$ be a probability measure on $\cX$ and $p\geq 1$. We say that $\pi$ satisfies

(i) the logarithmic Sobolev inequality with constant $C$, if for all nonnegative functions $f$, we have
$$\Ent_{\pi}(f) \leq \frac{1}{2C}\I_{\pi}(f) \qquad (LSI(C));$$

(ii) the modified logarithmic Sobolev inequality with constant $C$, which we shall write $mLSI(C)$, if for all nonnegative functions $f$, we have
$$\Ent_{\pi}(f) \leq \frac{1}{2C}\int{\Gamma(f, \log f)d\pi} \qquad (mLSI(C));$$

(iii) the transport-entropy inequality $T_p(C)$ if for all probability measures $\nu = f\pi$, we have
$$W_p(\nu, \mu)^2 \leq \frac{2}{C}\Ent_{\pi}(f)\qquad (T_p(C));$$

(iv) the transport-information inequality $T_pI(C)$ if for all probability measures $\nu = f\pi$, we have
$$W_p(\nu, \mu)^2 \leq \frac{1}{C^2}\I_{\pi}(f)\qquad (T_pI(C)).$$
\end{defn}

In the continuous setting, $mLSI$ and $LSI$ are the same inequality, but in the discrete setting they correspond to distinct properties of the Markov chain, namely hypercontractivity for $LSI$ and exponential convergence to equilibrium in relative entropy for $mLSI$. In general, $LSI$ implies $mLSI$, but the converse is \emph{not} true. We refer to \cite{BT} for more on the difference between the two inequalities.

In the discrete setting, when $p=1$, the following relations between the inequalities hold, in the same way as in the continuous setting:
$$\text{LSI}(C) \Rightarrow T_1I(C) \Rightarrow T_1(C).$$
The $T_1$ inequality is equivalent to Gaussian concentration for $\pi$ (see for example \cite{Led} and the next section), but not dimension-free concentration, and is therefore strictly weaker than $T_2$. When $p=1$, the transport-information inequality is equivalent to Gaussian concentration for the occupation measure of the Markov chain (see \cite{GLWY2009}), and is therefore useful to get a priori bounds on the statistical error for Markov Chain Monte Carlo estimation of averages.

One of the most interesting cases in the continuous setting is the transport-entropy inequality when $p=2$, which is also called the Talagrand inequality, which was introduced in \cite{Tal}. It is equivalent to dimension-free Gaussian concentration for $\pi$.

One can show the following relationships:
$$CD(\kappa,\infty)\Rightarrow LSI(\kappa) \Rightarrow T_2I(\kappa)\Rightarrow T_2(\kappa).$$
We refer to \cite{OV} and \cite{GLWW2} for the proofs of these implications. With those inequalities in hand, one can prove some dimension-free concentration results on a metric-measure space (see \cite{Go09}).

However, the results for $p=2$ fail to be true in discrete settings, as we will see in next section. When $\cX$ is a graph, $\pi$ never satisfies $T_2$, unless it is a Dirac measure (see for example \cite{GRS14}, or Section \ref{prelim}). To recover a discrete version of $T_2$, we therefore have to redefine the transport cost. Erbar and  Maas recovered some of those functional inequality results with the notion of entropic Ricci curvature on graphs, and we refer the reader to \cite{EM12,Maas11} for more details. Another way to deal with it is to take the weak transport cost introduced by Marton \cite{Mar96}:

\begin{defn}
Let $(\cX,d)$ be a polish space and $\mu,\nu$ two probabilities measures on $\cX$, define
$$\widetilde{\mathcal{T}}_2(\nu|\mu):=\inf_{\pi\in\Pi(\mu,\nu)}\left\{\int \left(\int d(x,y)p_x(dy)\right)^2\mu(dx)\right\}.$$
Where $\Pi(\mu,\nu)$ is the set of all couplings $\pi$ whose first marginal is $\mu$ and second marginal is $\nu$, $p_x$ is the probability kernel such that $\pi(dxdy)=p_x(dy)\mu(dx)$.
Using probabilistic notations, on has
$$\widetilde{\mathcal{T}}_2(\nu|\mu)=\inf_{X\sim \mu,Y\sim \nu} \E((\E(d(X,Y)|X)^2)).$$
\end{defn}
Note that the weak transport cost could be also seen as a weak Wasserstein-like distance. In order to agree with the notations of Wassertein distance, we note $\tW_2(\nu|\mu)^2:= \widetilde{\mathcal{T}}_2(\nu|\mu)$. However, it is \emph{not} a distance, since it is not symmetric. Note that by Jensen's inequality, both $\tW_2(\nu|\mu)$ and $\tW_2(\mu,\nu)$ are larger than $W_1(\mu,\nu)$.

\begin{defn}
Adapting the settings of Definition \ref{deffunctionalinequality}, we say that $\pi$ satisfies

(v) the weak transport-entropy inequality $\widetilde{T}^+_2(C)$ if for all probability measures $\nu = f\pi$, we have
$$\tW_2(f\pi|\pi)^2 \leq \frac{2}{C}\Ent_{\pi}(f);$$
(vi) the weak transport-entropy inequality $\widetilde{T}^-_2(C)$ if for all probability measures $\nu = f\pi$, we have
$$\tW_2(\pi|f\pi)^2 \leq \frac{2}{C}\Ent_{\pi}(f);$$
(vii) the weak transport-information inequality $\widetilde{T}^+I_2(C)$ if for all probability measures $\nu = f\pi$, we have
$$\tW_2(f\pi| \pi)^2 \leq \frac{1}{C^2}\I_{\pi}(f).$$
(viii)  the weak transport-information inequality $\widetilde{T}^-I_2(C)$ if for all probability measures $\nu = f\pi$, we have
$$\tW_2(\pi|f\pi)^2 \leq \frac{1}{C^2}\I_{\pi}(f).$$
\end{defn}

Here we only consider the case when the cost function is quadratic, for more general result about weak transport inequalities, we refer to \cite{GRST15} and \cite{Sh15}.

Our main results are the following theorems:

\begin{thm}\label{CDW1I}
Let $(\cX,d_\Gamma)$ be a connected graph equipped with $\Gamma$-distance $d_\Gamma$. Let $K$ be an irreducible Markov kernel on $\cX$ and $\pi$ the reversible invariant probability measure associated to $K$. Assume that $CD(\kappa,\infty)$ holds with $\kappa>0$. Then $\pi$ satisfies the transport-information inequality $T_1I$ with constant $\kappa$. More precisely, for all probability measure $\nu:= f\pi$ on $\cX$, it holds
$$W_{1,d_\Gamma}(f\pi,\pi)^2\leq \frac{2}{\kappa^2}\I_\pi(f).$$
\end{thm}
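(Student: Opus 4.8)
The plan is to combine the Kantorovich--Rubinstein dual description of $W_{1,d_\Gamma}$ with a semigroup interpolation along the heat flow $P_t$, mirroring the proof of $T_1I$ under a curvature lower bound in the continuous setting. Since the $1$-Lipschitz functions for $d_\Gamma$ are exactly those $g$ with $\Gamma(g)\le 1$, duality gives $W_{1,d_\Gamma}(f\pi,\pi)=\sup_{\Gamma(g)\le 1}\int g(f-1)\,d\pi$, so it suffices to bound $\int g\,d\nu-\int g\,d\pi$ for one such $g$. Writing this difference as $-\int_0^\infty \frac{d}{dt}\bigl(\int P_tg\,d\nu\bigr)\,dt$, using $\frac{d}{dt}P_tg=LP_tg$ and the reversibility identity $\int (Lh)k\,d\pi=-\int\Gamma(h,k)\,d\pi$, I would obtain the representation $\int g\,d\nu-\int g\,d\pi=\int_0^\infty\!\int\Gamma(P_tg,f)\,d\pi\,dt$. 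Here $\kappa>0$ forces $P_tg\to\pi(g)$ as $t\to\infty$, which also justifies convergence of the time integral a posteriori.

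The device that converts the curvature hypothesis into a quantitative estimate is the gradient bound $\Gamma(P_tg)\le e^{-2\kappa t}P_t\Gamma(g)$. I would derive it by the standard interpolation: set $\psi(s)=P_s\bigl(\Gamma(P_{t-s}g)\bigr)$ and compute $\psi'(s)=2P_s\bigl(\Gamma_2(P_{t-s}g)\bigr)$; applying $CD(\kappa,\infty)$ to the function $P_{t-s}g$ gives $\psi'(s)\ge 2\kappa\psi(s)$, and integrating on $[0,t]$ yields the bound. This argument transfers verbatim to the discrete setting because it never invokes the chain rule. Since $\Gamma(g)\le 1$ and $P_t$ preserves positivity and constants, it follows that $\Gamma(P_tg)\le e^{-2\kappa t}$ pointwise.

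The heart of the proof is controlling $\int\Gamma(P_tg,f)\,d\pi$ by the Fisher information $\I_\pi(f)=4\int\Gamma(\sqrt f)\,d\pi$. The crucial discrete manoeuvre is to factor $f(y)-f(x)=(\sqrt{f(y)}-\sqrt{f(x)})(\sqrt{f(y)}+\sqrt{f(x)})$ inside $\Gamma(P_tg,f)$ and apply Cauchy--Schwarz with respect to the measure $K(x,y)\pi(x)$ on pairs. The ``difference'' factor reproduces $\sum_{x,y}(\sqrt{f(y)}-\sqrt{f(x)})^2K(x,y)\pi(x)=\tfrac12\I_\pi(f)$, while for the ``sum'' factor I use $(\sqrt{f(y)}+\sqrt{f(x)})^2\le 2(f(x)+f(y))$ and then reversibility to collapse it to $8\int\Gamma(P_tg)\,f\,d\pi$. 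At this stage the gradient bound together with $\int f\,d\pi=1$ gives $\int\Gamma(P_tg)\,f\,d\pi\le e^{-2\kappa t}$, hence $\int\Gamma(P_tg,f)\,d\pi\lesssim e^{-\kappa t}\sqrt{\I_\pi(f)}$.

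Finally I would integrate in time, $\int_0^\infty e^{-\kappa t}\,dt=1/\kappa$, and take the supremum over $g$ with $\Gamma(g)\le 1$ to get $W_{1,d_\Gamma}(f\pi,\pi)\le C\kappa^{-1}\sqrt{\I_\pi(f)}$, which upon squaring is the asserted inequality (the stated constant $2/\kappa^2$ leaves some slack in the Cauchy--Schwarz step). The main obstacle I anticipate is twofold. First, justifying the semigroup manipulations --- differentiation under the sum, convergence of $P_tg$, and convergence of the time integral --- which is immediate for finite $\cX$ but requires a truncation or approximation argument in the countably infinite case. Second, and more essential, performing the Cauchy--Schwarz splitting so that the genuine Fisher information $\int\Gamma(\sqrt f)$, rather than $\int\Gamma(f)$, appears: it is precisely the $\sqrt f$-factorization combined with reversibility that renders the curvature-based gradient bound usable and pins down the exponential rate $e^{-\kappa t}$.
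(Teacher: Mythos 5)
Your proposal is correct and follows essentially the same route as the paper's proof: Kantorovich--Rubinstein duality over $\{g:\Gamma(g)\le 1\}$, semigroup interpolation, the gradient bound $\Gamma(P_tg)\le e^{-2\kappa t}P_t\Gamma(g)$ (which the paper cites from Klartag--Kozma--Ralli--Tetali rather than re-deriving), and the $\sqrt f$-factorization with Cauchy--Schwarz and reversibility. The only difference is cosmetic: your single global Cauchy--Schwarz over pairs is slightly sharper than the paper's two-stage estimate and in fact yields the constant $1/\kappa^2$ rather than $2/\kappa^2$, so it implies the stated inequality with room to spare.
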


With such a result in hand, we can then follow the work by Guillin, Leonard, Wang and Wu \cite{GLWW2}, to prove a transport-entropy inequality $T_1$ holds, so that the Gaussian concentration property follows as well. Another application is that after a simple computation, one can obtain the following Bonnet-Meyer type theorem:

\begin{cor}\label{dia_estimate}
Assume that $CD(\kappa,\infty)$ holds, then
$$d_g(x,y)\kappa \leq 2\min\{\sqrt{J(x)},\sqrt{J(y)}\}\left(\sqrt{J(x)}+\sqrt{J(y)}\right).$$
\end{cor}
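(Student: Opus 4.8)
The plan is to feed well-chosen probability measures into the transport-information inequality of Theorem~\ref{CDW1I} and then pass from the $\Gamma$-distance to the graph distance via \eqref{d_and_d_gamma2}. The natural test objects are Dirac masses: fix $x,y\in\cX$ and apply the theorem to $\nu=\delta_x$, which is of the form $f\pi$ with density $f=\mathbf{1}_{\{x\}}/\pi(x)$. This is legitimate since $\pi(x)>0$ by irreducibility.

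First I would compute the Fisher information of this density. Writing $\I_\pi(f)=2\sum_{z,w}(\sqrt{f(w)}-\sqrt{f(z)})^2K(z,w)\pi(z)$ (I use $z,w$ as summation variables to avoid clashing with the fixed points $x,y$), note that $\sqrt{f}$ vanishes off $x$ and equals $1/\sqrt{\pi(x)}$ at $x$, so only the ordered pairs $(z,w)$ with exactly one coordinate equal to $x$ contribute. The pairs with $z=x$ give $2\sum_{w\neq x}K(x,w)=2J(x)$, while the pairs with $w=x$ give $2\sum_{z\neq x}\frac{1}{\pi(x)}K(z,x)\pi(z)$, which also equals $2J(x)$ after using the detailed balance relation $K(z,x)\pi(z)=K(x,z)\pi(x)$. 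Hence $\I_\pi(\delta_x/\pi)=4J(x)$, and Theorem~\ref{CDW1I} yields
$$W_{1,d_\Gamma}(\delta_x,\pi)\leq \frac{2\sqrt{2}}{\kappa}\sqrt{J(x)},$$
together with the analogous bound for $y$.

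Next I would combine these two estimates with the triangle inequality for the Wasserstein distance, which is a genuine metric. Since $W_{1,d_\Gamma}(\delta_x,\delta_y)=d_\Gamma(x,y)$, this gives
$$d_\Gamma(x,y)\leq W_{1,d_\Gamma}(\delta_x,\pi)+W_{1,d_\Gamma}(\pi,\delta_y)\leq \frac{2\sqrt{2}}{\kappa}\left(\sqrt{J(x)}+\sqrt{J(y)}\right).$$
Finally, inserting this into \eqref{d_and_d_gamma2}, namely $d_g(x,y)\leq \tfrac{1}{\sqrt2}\min\{\sqrt{J(x)},\sqrt{J(y)}\}\,d_\Gamma(x,y)$, the constants combine as $\frac{1}{\sqrt2}\cdot\frac{2\sqrt2}{\kappa}=\frac{2}{\kappa}$, producing exactly $\kappa\,d_g(x,y)\leq 2\min\{\sqrt{J(x)},\sqrt{J(y)}\}(\sqrt{J(x)}+\sqrt{J(y)})$.

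I do not expect a serious obstacle once Theorem~\ref{CDW1I} is available; the only point requiring a little care is the Fisher-information computation, where reversibility is precisely what makes the two halves of the sum coincide and produce the clean value $4J(x)$. Conceptually, the essential ingredient is the positivity $\kappa>0$: it is what keeps the right-hand side finite and forces the diameter to be bounded, which is the Bonnet-Meyer phenomenon being captured here.
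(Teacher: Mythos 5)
Your proposal is correct and follows essentially the same route as the paper: the paper also applies the $T_1I$ inequality of Theorem~\ref{CDW1I} to the Dirac density $f_x=\mathbf{1}_{\{x\}}/\pi(x)$ (computing $\I_\pi(f_x)\leq 4J(x)$), uses the triangle inequality $d_\Gamma(x,y)=W_{1,d_\Gamma}(\delta_x,\delta_y)\leq W_{1,d_\Gamma}(\delta_x,\pi)+W_{1,d_\Gamma}(\pi,\delta_y)$, and then converts to $d_g$ via \eqref{d_and_d_gamma2}. The only cosmetic difference is that the paper packages the Dirac-mass argument as a general diameter-estimate theorem valid for any distance and any $T_1I(C)$ inequality before specializing to $C=\kappa/\sqrt{2}$, whereas you carry out the specialization inline.
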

Recall that under coarse Ricci curvature condition, Ollivier has proved in \cite{Oll2009} the same type inequality:  $\kappa_c d(x,y)\leq J(x)+J(y)$. Here we get a diameter estimate with a better order but we lose a constant 2 order under the condition $CD(\kappa,\infty)$.

Now if we make the stronger assumption $CDE'(\kappa_e,\infty)$, we get a stronger inequality:
\begin{thm}\label{thmW2I}
Let $(\cX,d)$ be a connected graph equipped with graph distance $d$. Let $K$ be a irreducible Markov kernel on $\cX$ and $\pi$ the reversible invariant probability measure associated to $K$. Assume that $CDE'(\kappa_e,\infty)$ holds with $\kappa_e>0$. Then $\pi$ satisfies the transport-information inequality $T_2^+I$ with constant $\kappa_e/\sqrt{2}$. More precisely, for all probability measure $\nu:= f\pi$ on $\cX$, it holds
$$\tW_2(f\pi|\pi)^2\leq \frac{2J}{\kappa_e^2}\I_\pi(f) \leq \frac{2}{\kappa_e^2}\I_\pi(f).$$
\end{thm}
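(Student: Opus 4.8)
The plan is to convert CDE'$(\kappa_e,\infty)$ into a commutation (gradient) bound for the semigroup applied to square roots, deduce exponential decay of the Fisher information along the heat flow, and then run a ``transport along the flow'' argument adapted to the weak cost; the final step is a change of distance from $d_\Gamma$ to the graph distance $d_g$, which produces the factor $J$. The crux is the estimate
$$\Gamma(\sqrt{P_t f})\;\le\;e^{-2\kappa_e t}\,P_t\big(\Gamma(\sqrt f)\big),\qquad t\ge 0.$$
I would prove this by interpolation. Fix $t>0$, set $u_s:=\sqrt{P_{t-s}f}$ and $\phi(s):=P_s\big(\Gamma(u_s)\big)$ for $s\in[0,t]$. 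Differentiating $\partial_s u_s^2=-Lu_s^2$ and using the discrete square-root identity $2u\,Lu=Lu^2-2\Gamma(u)$ recalled in the introduction gives $\partial_s u_s=-Lu_s-\Gamma(u_s)/u_s$. Substituting into $\phi'(s)=P_s\big(L\Gamma(u_s)+2\Gamma(u_s,\partial_s u_s)\big)$ and recognizing $\tfrac12 L\Gamma(u)-\Gamma(u,Lu)=\Gamma_2(u)$ yields $\phi'(s)=2P_s\big(\tilde{\Gamma}_2(u_s)\big)$. Since $u_s\ge 0$, CDE'$(\kappa_e,\infty)$ gives $\tilde{\Gamma}_2(u_s)\ge\kappa_e\Gamma(u_s)$, so $\phi'\ge 2\kappa_e\phi$ and Gronwall gives $\phi(t)\ge e^{2\kappa_e t}\phi(0)$, which is the claim (if $f$ vanishes one works with $f+\varepsilon$ and lets $\varepsilon\to0$). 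Integrating against $\pi$ and using invariance $\int P_t h\,d\pi=\int h\,d\pi$ gives $\I_\pi(P_t f)\le e^{-2\kappa_e t}\I_\pi(f)$.

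I would first prove the inequality for $d_\Gamma$, namely $\tW_{2,d_\Gamma}(f\pi|\pi)^2\le \tfrac{4}{\kappa_e^2}\I_\pi(f)$, and then transfer. Write $f_t:=P_t f$, so $f_0=f$, $f_\infty\equiv 1$, $\partial_t f_t=Lf_t$. By the Kantorovich-type duality for weak transport costs \cite{GRST15}, $\tW_{2,d_\Gamma}(f\pi|\pi)^2=\sup_\phi\{\int\phi f\,d\pi-\int \tQ\phi\,d\pi\}$, where $\tQ\phi(x)=\sup_p\{\int\phi\,dp-(\int d_\Gamma(x,y)\,p(dy))^2\}$ is a supremum over probability measures $p$, so that $\tQ\phi\ge\phi$. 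Splitting $\int\phi f\,d\pi-\int\tQ\phi\,d\pi=\int\phi(f-1)\,d\pi+\int(\phi-\tQ\phi)\,d\pi$, the second term is nonpositive, and integration by parts in the Dirichlet form gives $\int\phi(f-1)\,d\pi=\int_0^\infty\!\!\int\Gamma(\phi,f_t)\,d\pi\,dt$. The estimate is thus reduced to controlling $\int_0^\infty\!\!\int\Gamma(\phi,f_t)\,d\pi\,dt$ by $\tfrac{1}{\kappa_e^2}\I_\pi(f)$ modulo the nonpositive weak Hamilton--Jacobi defect $\int(\phi-\tQ\phi)\,d\pi$.

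The core algebraic step factorizes $\Gamma(\phi,f_t)$ through $\sqrt{f_t}$: writing $f_t(y)-f_t(x)=(\sqrt{f_t(y)}+\sqrt{f_t(x)})(\sqrt{f_t(y)}-\sqrt{f_t(x)})$, applying Cauchy--Schwarz against the symmetric measure $\pi(x)K(x,y)$, and using $(\sqrt a+\sqrt b)^2\le 2(a+b)$ together with reversibility gives $\int\Gamma(\phi,f_t)\,d\pi\le\big(\int\Gamma(\phi)\,f_t\,d\pi\big)^{1/2}\,\I_\pi(f_t)^{1/2}$. By the first paragraph the last factor decays like $e^{-\kappa_e t}\I_\pi(f)^{1/2}$, and $\int_0^\infty e^{-\kappa_e t}\,dt=\kappa_e^{-1}$; the remaining factor $\int\Gamma(\phi)f_t\,d\pi$ is the ``length'' of $\phi$, to be absorbed by the defect $\int(\tQ\phi-\phi)\,d\pi$. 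I expect the \emph{main obstacle} to lie exactly here: merely bounding $\Gamma(\phi)\le1$ recovers only the linearized $W_1$ statement $W_{1,d_\Gamma}(f\pi,\pi)^2\le\kappa_e^{-2}\I_\pi(f)$, so obtaining the genuine weak-$W_2$ bound forces one to run the inf-convolution (Hamilton--Jacobi) flow of $\phi$ along the same time parameter and to check that its infinitesimal defect dominates $\int\Gamma(\phi)f_t\,d\pi$ with the correct constant. Since there is no chain rule and $\tQ$ is an infimum over \emph{measures} rather than points, this is where the quadratic form of the cost and the conditional-expectation structure of $\tW_2$ are essential, and where the constant $4$ enters.

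Finally I would transfer to the graph distance. By \eqref{d_and_d_gamma2}, $d_g(x,y)\le\sqrt{J/2}\,d_\Gamma(x,y)$ pointwise, so for every coupling $\int d_g(x,y)\,p_x(dy)\le\sqrt{J/2}\int d_\Gamma(x,y)\,p_x(dy)$; squaring, integrating against $\pi$ and taking the infimum gives $\tW_2(f\pi|\pi)^2\le\tfrac{J}{2}\,\tW_{2,d_\Gamma}(f\pi|\pi)^2\le\tfrac{J}{2}\cdot\tfrac{4}{\kappa_e^2}\I_\pi(f)=\tfrac{2J}{\kappa_e^2}\I_\pi(f)$, which is at most $\tfrac{2}{\kappa_e^2}\I_\pi(f)$ since $J\le1$. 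This is precisely the asserted bound, with the transport-information constant $\kappa_e/\sqrt2$.
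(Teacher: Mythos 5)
Your first paragraph is correct and coincides with part $(i)$ of Lemma \ref{commutaionP_tsqrt} of the paper, proved by the same interpolation argument. But the remainder of the proposal has a genuine gap, and it is exactly the one you flag yourself: the absorption of $\int\Gamma(\phi)f_t\,d\pi$ into a Hamilton--Jacobi defect is never carried out, and that absorption \emph{is} the proof. Your static decomposition cannot deliver it: you freeze the test function $\phi$, so the only negative term available is the single number $\int(\phi-\tQ\phi)\,d\pi$, carrying weight $1$, whereas the terms to be absorbed, $\int_0^\infty e^{-\kappa_e t}(\int\Gamma(\phi)f_t\,d\pi)^{1/2}\I_\pi(f)^{1/2}\,dt$, carry the weight $f_t=P_tf$ and their main contribution comes from small $t$, where $f_t$ is far from $1$. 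The paper resolves this by letting \emph{both} arguments evolve: it differentiates $t\mapsto\int\tQ_{\alpha(t)}g\,P_tf\,d\pi$ with $\alpha(t)=e^{-\kappa_e t}$, so that the Hamilton--Jacobi inequality \eqref{HJalpha} produces, inside the time integral and with the weight $P_tf$, the negative term $-\frac{\kappa_e}{4}e^{-\kappa_e t}\int|\widetilde{\nabla}\tQ_{\alpha(t)}g|^2P_tf\,d\pi$. Your Cauchy--Schwarz factorization is also structurally the wrong one for this purpose: it leaves the weighted quantity $\int\Gamma(\phi)f_t\,d\pi$, which cannot be compared with $\int|\widetilde{\nabla}\tQ\phi|^2f_t\,d\pi$ (pointwise, $\Gamma(\phi)$ sees both signs of the increments while $|\widetilde{\nabla}\phi|$ sees only the negative parts, and the symmetrization behind Lemma \ref{Estimation of Gamma} breaks in the presence of the weight). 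The paper instead invokes part $(iii)$ of Lemma \ref{Estimation of Gamma}, $\int\Gamma(P_tf,\tQ_{\alpha(t)}g)\,d\pi\le 2\sqrt{2J}\int|\widetilde{\nabla}\tQ_{\alpha(t)}g|\sqrt{P_tf\,\Gamma(\sqrt{P_tf})}\,d\pi$, which keeps the gradient explicit; Young's inequality with the time-dependent weight $\frac{\kappa_e}{4}e^{-\kappa_e t}$ then cancels its square against the HJ term and leaves $\frac{8J}{\kappa_e}e^{\kappa_e t}\Gamma(\sqrt{P_tf})$, which your paragraph-one estimate converts to $\frac{8J}{\kappa_e}e^{-\kappa_e t}P_t\Gamma(\sqrt f)$; integrating in $t$ gives $\frac{2J}{\kappa_e^2}\I_\pi(f)$. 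Note in particular that the factor $J$ comes from Lemma \ref{Estimation of Gamma}$(iii)$, with the whole argument run directly in the graph distance --- not from a change of metric.

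Two further problems. First, your duality formula points the wrong way: with your sup-convolution $\tQ\phi(x)=\sup_p\{\int\phi\,dp-(\int d(x,y)p(dy))^2\}$, the substitution $\phi=-g$ turns $\sup_\phi\{\int\phi f\,d\pi-\int\tQ\phi\,d\pi\}$ into $\sup_g\{\int\tQ_1 g\,d\pi-\int g f\,d\pi\}$, which by \eqref{duality} equals $\tW_2(\pi|f\pi)^2$, not $\tW_2(f\pi|\pi)^2$. This is not cosmetic here: the paper obtains the bound by $\I_\pi(f)$ only for $\tW_2(f\pi|\pi)^2$, and explicitly states it could not do so for $\tW_2(\pi|f\pi)^2$, where only the weaker bound with $\overline{\I}_\pi(f)$ is proved (Theorem \ref{W2Ifaible}); the two directions require different time changes and different parts of Lemma \ref{Estimation of Gamma}. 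Second, your intermediate claim $\tW_{2,d_\Gamma}(f\pi|\pi)^2\le\frac{4}{\kappa_e^2}\I_\pi(f)$ is itself unproven, and the tools it would need (the inequality \eqref{HJ} of \cite{Sh15} and Lemma \ref{Estimation of Gamma}) have constants computed for the graph distance, whose proofs use $d(x,y)=1$ for neighbours; for $d_\Gamma$ they would have to be reworked. The transfer step via \eqref{d_and_d_gamma2} is fine in itself, but the constant $4$ you feed into it is reverse-engineered from the desired conclusion rather than derived.
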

Again, following the ideas of \cite{GLWW2}, one can prove a weak-transport entropy inequality $\widetilde {T}_2^+H$. On the other hand, sine the weak-transport cost is stronger than the $L^1$-Wasserstein distance, it yields immediately $T_1I$ holds, which implies $T_1H$ and concentration results.

Under coarse Ricci curvature condition, the inequality $T_1I(\kappa_c)$ holds
\begin{thm}\label{thmWIollivier}
Let $\cX,\pi,K$ define as before.
If the global coarse Ricci curvature is bounded from below by $\kappa_c>0$, then the following transport inequality holds for all density function $f$:
$$W_{1}(f\pi,\pi)^2\leq \frac{1}{\kappa_c^2}\I_\pi(f)\left(J-\frac{1}{8}\I_\pi(f)\right) \leq \frac{1}{\kappa_c^2}\I_\pi(f).$$
\end{thm}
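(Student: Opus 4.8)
The plan is to transport $\nu=f\pi$ to equilibrium along the semigroup and to bound $W_1(f\pi,\pi)$ by the length of the curve $t\mapsto P_t^*\nu$. Writing $f_t:=P_tf$ for the density of $P_t^*\nu$ with respect to $\pi$ (so $f_0=f$ and $\partial_tf_t=Lf_t$, since $\pi$ is reversible), irreducibility together with $\kappa_c>0$ forces $P_t^*\nu\to\pi$ in $W_1$, so that
\[
W_1(f\pi,\pi)\leq\int_0^\infty v(t)\,dt,\qquad v(t):=\limsup_{h\to0^+}\frac{W_1(P_t^*\nu,P_{t+h}^*\nu)}{h}.
\]
It then suffices to bound the metric speed $v(t)$ by the Fisher information and to use the coarse Ricci contraction to make the time integral converge.

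First I would estimate $v(t)$. By Kantorovich--Rubinstein duality, $W_1(P_t^*\nu,P_{t+h}^*\nu)=\sup_g\int g\,(f_{t+h}-f_t)\,d\pi$, the supremum over $g$ that are $1$-Lipschitz for the graph distance; differentiating and using the reversible integration-by-parts identity $\int gLf_t\,d\pi=-\int\Gamma(g,f_t)\,d\pi$ gives $v(t)\leq\sup_g\bigl|\int\Gamma(g,f_t)\,d\pi\bigr|$. Since a $1$-Lipschitz $g$ satisfies $|g(y)-g(x)|\leq1$ whenever $K(x,y)>0$, this is at most $\tfrac12\sum_{x\neq y}|f_t(y)-f_t(x)|K(x,y)\pi(x)$. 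Factoring $|f_t(y)-f_t(x)|=|\sqrt{f_t(y)}-\sqrt{f_t(x)}|\,(\sqrt{f_t(y)}+\sqrt{f_t(x)})$ and applying Cauchy--Schwarz splits the sum into $\sqrt{\I_\pi(f_t)/2}$ times $\bigl(\sum_{x\neq y}(\sqrt{f_t(y)}+\sqrt{f_t(x)})^2K(x,y)\pi(x)\bigr)^{1/2}$. The key algebraic step is $(\sqrt a+\sqrt b)^2=2(a+b)-(\sqrt a-\sqrt b)^2$, which with reversibility and $\sum_yK(x,y)=1$ rewrites the second factor as $4\sum_xf_t(x)J(x)\pi(x)-\tfrac12\I_\pi(f_t)\leq 4J-\tfrac12\I_\pi(f_t)$; here the laziness enters exactly through the diagonal contribution $1-K(x,x)=J(x)$. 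Collecting terms yields the speed bound
\[
v(t)^2\leq\frac{1}{16}\,\I_\pi(f_t)\bigl(8J-\I_\pi(f_t)\bigr).
\]

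Next I would feed in the curvature. Applying the contraction $W_1(P_s^*\mu_1,P_s^*\mu_2)\leq e^{-\kappa_cs}W_1(\mu_1,\mu_2)$ with $\mu_1=P_t^*\nu$, $\mu_2=P_{t+h}^*\nu$ and letting $h\to0$ gives $v(t+s)\leq e^{-\kappa_cs}v(t)$; in particular $v(s)\leq e^{-\kappa_cs}v(0)$, whence $\int_0^\infty v(t)\,dt\leq v(0)/\kappa_c$. Combining this with the speed bound at $t=0$ (where $f_0=f$) gives $W_1(f\pi,\pi)^2\leq\frac{1}{16\kappa_c^2}\I_\pi(f)(8J-\I_\pi(f))=\frac{1}{2\kappa_c^2}\I_\pi(f)\bigl(J-\tfrac18\I_\pi(f)\bigr)$, which in particular yields the stated inequality; the final bound $\leq\frac{1}{\kappa_c^2}\I_\pi(f)$ then follows from $\I_\pi(f)\leq4J$ and $J\leq1$, which force $J-\tfrac18\I_\pi(f)\geq J/2$.

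I expect the main obstacle to be the rigorous justification of the metric-speed formalism rather than the algebra: one must justify differentiating $t\mapsto\int gf_t\,d\pi$ uniformly in the $1$-Lipschitz test function (so that the supremum may be exchanged with the time derivative), verify that $v$ is measurable and that $W_1(f\pi,\pi)\leq\int_0^\infty v$, and confirm $P_t^*\nu\to\pi$ in $W_1$ --- all routine on a finite $\cX$ but requiring a truncation/approximation argument in the countably infinite case, using the finiteness of $\I_\pi(f)$ and of the Wasserstein distances involved. A secondary care is to apply Cauchy--Schwarz only to off-diagonal pairs, so that the laziness refinement $8J-\I_\pi(f)$ (rather than merely $8J$) is retained.
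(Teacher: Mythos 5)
Your proof is correct (the regularity points you flag are indeed routine on a finite space), and at its core it is the paper's argument in dual form. The paper fixes the density $f$ and runs the semigroup on the Kantorovich test function: its Lemma~\ref{lem_transport-l1_d_g} gives $W_1(f\pi,\pi)\leq\frac{1}{\kappa_c}\sum_{x\neq y}|f(x)-f(y)|K(x,y)\pi(x)$ by differentiating $\int P_tg\,f\,d\pi$ and using that $P_tg$ is $e^{-\kappa_c t}$-Lipschitz, whereas you run the semigroup on the measure, bounding the metric speed of $t\mapsto P_t^*\nu$ and invoking the $W_1$-contraction directly; since by reversibility $\int\Gamma(P_tg,f)\,d\pi=\int\Gamma(g,P_tf)\,d\pi$, the two interpolations compute the same quantity, and your subsequent algebra (factoring $|f(y)-f(x)|$, off-diagonal Cauchy--Schwarz, and the identity $(\sqrt a+\sqrt b)^2=2(a+b)-(\sqrt a-\sqrt b)^2$) coincides with the paper's proof of Theorem~\ref{thmWIollivier}. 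The one substantive difference is in the outcome: your bookkeeping keeps the factor $\frac12$ coming from the definition of $\Gamma$, so you obtain $W_1(f\pi,\pi)^2\leq\frac{1}{2\kappa_c^2}\I_\pi(f)\left(J-\frac{1}{8}\I_\pi(f)\right)$, a factor $2$ better than the stated theorem. This is not a mistake on your part: the paper's proof of Lemma~\ref{lem_transport-l1_d_g} drops the $\frac12$ of $\Gamma$ when writing the time derivative (the sharp constant there is $\frac{1}{2\kappa_c}$), and its Cauchy--Schwarz step then uses the prefactor $\frac14$ inside the square root where the definition of $\I_\pi$ only justifies $\frac12$; the two slips partially cancel and yield the stated constant, which is true but weaker than what the method gives. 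One small wording fix at the end: the bound $J-\frac18\I_\pi(f)\geq J/2$ (from $\I_\pi(f)\leq 4J$) is what guarantees nonnegativity, so that your factor-$2$-better bound indeed implies the theorem's first inequality; the final bound $\leq\frac{1}{\kappa_c^2}\I_\pi(f)$ instead uses $J-\frac18\I_\pi(f)\leq J\leq 1$.
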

As a corollary, this last result implies a $T_1$ inequality for such Markov chain, which has been previously obtained by Eldan, Lehec and Lee \cite{ELL}.

The paper is organized as follows: in first section we will explain why $T_2H$ can not hold in general, then establish connection of results in \cite{Sh15} and the Fisher information on graph settings. Section 2 gives a few preliminary results about Hamilton-Jacobi equations on graphs. In the third, fourth and fifth sections we will discuss functional inequalities under $CD(\kappa,\infty)$, $CDE'(\kappa_e,\infty)$ and coarse Ricci curvature $\kappa_c$ respectively. In the last section we will show some applications, such as how a transport-information inequality implies a transport-entropy inequality, concentration results, a discrete analogue of the Bonnet-Meyer theorem, and a study of the example of the discrete hypercube.
%
%
%

\section{Preliminary}
\label{prelim}
In this section, we present some general results in the discrete setting, without assuming any curvature condition. Our main concern is to present the Hamilton-Jacobi equations on graphs introduced in \cite{GRST15,Sh15} and their relation with weak transport costs.

As we mentioned in the introduction, in the discrete setting, when $\pi$ is not a Dirac mass, the inequality $T_2(\kappa)$ cannot hold true, for any $\kappa>0$. To our knowledge, this was first proved in \cite{GRS14}. We give here a different proof, as a consequence of a more general result:
\begin{lem}
Let $(\cX,d)$ be a metric space and $\mu$ a probability measure defined on $\cX$. Assume that there exist $C_1, C_2\subset \cX$ such that

$(i)$ $\inf_{x\in C_1,y\in C_2}d(x,y)>0$,

$(ii)$ $\mathrm{supp} (\mu) \subset C_1\cup C_2$,

$(iii)$  $\mu(C_1)>0$, $\mu(C_2)>0$.

Then $\mu$ does not satisfies $T_2(\kappa)$ for any $\kappa>0$.
\end{lem}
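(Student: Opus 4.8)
The plan is to contradict $T_2(\kappa)$ by exhibiting a one-parameter family of densities along which the relative entropy vanishes quadratically in a perturbation parameter, while the squared transport cost vanishes only linearly. Write $\delta := \inf_{x\in C_1,\,y\in C_2} d(x,y)$, which is positive by $(i)$ (this also forces $C_1\cap C_2=\emptyset$), and set $a := \mu(C_1)>0$, $b := \mu(C_2)>0$; by $(ii)$ and disjointness we have $a+b=1$. For small $\epsilon>0$ I define $\nu_\epsilon := f_\epsilon\,\mu$ by transferring mass $\epsilon$ from $C_2$ to $C_1$ without altering the conditional laws, i.e. $f_\epsilon := (a+\epsilon)/a$ on $C_1$ and $f_\epsilon := (b-\epsilon)/b$ on $C_2$.

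First I would estimate the entropy. A direct computation gives
$$\Ent_\mu(f_\epsilon) = (a+\epsilon)\log\frac{a+\epsilon}{a} + (b-\epsilon)\log\frac{b-\epsilon}{b},$$
and a second-order Taylor expansion about $\epsilon=0$ shows that the first-order terms cancel, leaving $\Ent_\mu(f_\epsilon) = \tfrac{\epsilon^2}{2ab} + O(\epsilon^3)$; in particular it is $\Theta(\epsilon^2)$. Next I would bound $W_2(\nu_\epsilon,\mu)^2$ from below. For any coupling $\pi$ of $\mu$ and $\nu_\epsilon$, the marginal constraints restricted to $C_1$ force $\pi(C_2\times C_1) - \pi(C_1\times C_2) = \nu_\epsilon(C_1)-\mu(C_1) = \epsilon$, whence $\pi(C_2\times C_1)\geq\epsilon$. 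Since every pair counted there lies at distance at least $\delta$, we get $\int d(x,y)^2\,d\pi \geq \delta^2\,\pi(C_2\times C_1)\geq \delta^2\epsilon$, and therefore $W_2(\nu_\epsilon,\mu)^2 \geq \delta^2\epsilon$.

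Combining the two estimates, if $T_2(\kappa)$ held we would obtain $\delta^2\epsilon \leq \tfrac{2}{\kappa}\Ent_\mu(f_\epsilon) = \tfrac{\epsilon^2}{\kappa ab}(1+o(1))$; dividing by $\epsilon$ and letting $\epsilon\to0$ yields $\delta^2\leq 0$, contradicting $\delta>0$. This proves $\mu$ satisfies $T_2(\kappa)$ for no $\kappa>0$.

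The conceptual heart of the argument — and precisely the point where the clustered discrete situation departs from the continuous one — is the scaling mismatch: relative entropy is quadratic in the size of the perturbation, whereas moving any positive amount of mass across the fixed gap of width $\delta$ incurs a squared cost that is only linear in that mass. I do not expect a genuine obstacle here; the only points requiring care are the marginal bookkeeping giving $\pi(C_2\times C_1)\geq\epsilon$ and the cancellation of the first-order terms in the entropy expansion, both of which are elementary.
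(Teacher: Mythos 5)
Your proof is correct and follows essentially the same route as the paper: the identical one-parameter perturbation (transferring mass $\epsilon$ across the gap while keeping conditional laws fixed), the same entropy formula with its $\Theta(\epsilon^2)$ expansion, and the same $W_2^2 \gtrsim \delta^2\epsilon$ lower bound, yielding the same scaling contradiction. In fact you supply two details the paper leaves implicit — the marginal bookkeeping giving $\pi(C_2\times C_1)\geq\epsilon$ and the explicit cancellation of first-order terms in the entropy — so nothing is missing.
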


\begin{proof}
For $h< \min\{\mu(C_1),\mu(C_2)\}$, define

 $$\nu_h(dx):=\left\{ \begin{array}{ll}
  \mu(dx)(1+\frac{h}{\mu(C_1)}), & x\in C_1 \\
  \mu(dx)(1-\frac{h}{\mu(C_2)}), & x\in C_2
  \end{array}
  \right.$$
Let $d:=\inf_{x\in C_1,y\in C_2}d(x,y)>0$. Then we have $W_2(\mu,\nu)^2 \geq d^2h$, and the entropy is
$$(\mu(C_1)+h)\log(1+h/\mu(C_1))+(\mu(C_2)-h)\log(1-h/\mu(C_2)).$$
When $h$ goes to $0$, the entropy is $\mathcal{O}(h^2)$. The conclusion follows since $W_2^2$ have order $\mathcal{O}(h)$.
\end{proof}
Thus, if $\pi$ satisfies $T_2$ on graphs, it means that $\pi$ is a Dirac mass.

In this section, we shall describe the links between weak transport inequalities and the Hamilton Jacobi operator that was introduced in \cite{GRST15} and studied in \cite{Sh15}.

Following \cite{GRST15}, the weak transport cost $\tW_2^2$ between two probability measures $\mu$ and $\nu$ satisfies the following duality formula:
\begin{equation}\label{duality}
\tW_2(\nu|\mu)^2=\sup_{g\in \mathcal{C}_c^b} \left\{\int \tQ_1g d\nu- \int g d\mu\right\}
\end{equation}
where the infimum-convolution operator is defined as
$$\tQ_t\phi(x):=\inf_{p\in \cP(\cX)}\left\{\int g(y) p(dy) + \frac{1}{t}\left(\int d(x,y)p(dy)\right)^2\right\},$$

Later, the second author remarked (see \cite{Sh15}) that the operator $\tQ_t$ satisfies a discrete version of the Hamilton-Jacobi equation: for all $t>0$
\begin{equation}\label{HJ}
\frac{\partial}{\partial t}\tQ_t g + \frac{1}{4}|\widetilde{\nabla} \tQ_tg|^2 \leq 0
\end{equation}
where $|\widetilde{\nabla}g|(x):= \sup_{y\in \cX} \frac{[g(y)-g(x)]_-}{d(x,y)}$. We refer to \cite{BGL14, Vi08} for information about Hamilton-Jacobi equations in the continuous setting and their link with functional inequalities.

Now let $\alpha\in \mathcal{C}_1(\RR^+)$, according to \eqref{HJ}, one can easily check that for all $t>0$, it holds
\begin{equation}\label{HJalpha}
|\alpha'(t)\frac{\partial}{\partial t}\tQ_{\alpha(t)} g| \geq \frac{1}{4} |\widetilde{\nabla} \tQ_{\alpha(t)}g|^2.
\end{equation}

The evolution with respect to time is controlled by this special "gradient". We refer readers to \cite{Sh15} for properties of $\tQ$ and $\widetilde \nabla$. Here we shall develop some more:

\begin{prop}(Convexity)\label{convexity of Q}
Let $g$ be a function defined on $\cX$, then for all $x\in \cX$, the function $t\mapsto \tQ_tg(x)$ is convex.
\end{prop}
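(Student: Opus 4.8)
The plan is to read off convexity in $t$ from the facts that $\tQ_t g(x)$ is a \emph{partial minimization} of a function that is jointly convex in the optimization variable and in $t$, and that the set of probability measures is convex. For a fixed $x$ and $g$, and any $p\in\cP(\cX)$, write $A(p):=\int g(y)\,p(dy)$ and $B(p):=\int d(x,y)\,p(dy)\ge 0$, so that
$$\tQ_t g(x)=\inf_{p\in\cP(\cX)}\left\{A(p)+\frac{B(p)^2}{t}\right\}.$$
First I would record the two structural observations driving the proof: both $A$ and $B$ are affine in $p$ (indeed $A(\lambda p_0+(1-\lambda)p_1)=\lambda A(p_0)+(1-\lambda)A(p_1)$, and likewise for $B$), and the ``quadratic-over-linear'' (perspective) function $(b,t)\mapsto b^2/t$ is jointly convex on $\RR\times(0,\infty)$.

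The heart of the argument is then a self-contained partial-minimization step. Fix $t_0,t_1>0$ and $\lambda\in[0,1]$, and set $t_\lambda:=\lambda t_0+(1-\lambda)t_1$. Given $\epsilon>0$, choose $p_0,p_1\in\cP(\cX)$ that are $\epsilon$-optimal for $\tQ_{t_0}g(x)$ and $\tQ_{t_1}g(x)$, and use the competitor $p_\lambda:=\lambda p_0+(1-\lambda)p_1$, which is again a probability measure, at time $t_\lambda$. Because $A$ is affine, its contribution splits exactly, so the only estimate needed is
$$\frac{B(p_\lambda)^2}{t_\lambda}\le \lambda\frac{B(p_0)^2}{t_0}+(1-\lambda)\frac{B(p_1)^2}{t_1},\qquad B(p_\lambda)=\lambda B(p_0)+(1-\lambda)B(p_1),$$
which is precisely the joint convexity of $b^2/t$. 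I would verify it directly by Cauchy--Schwarz,
$$\bigl(\lambda B(p_0)+(1-\lambda)B(p_1)\bigr)^2\le\bigl(\lambda t_0+(1-\lambda)t_1\bigr)\left(\lambda\frac{B(p_0)^2}{t_0}+(1-\lambda)\frac{B(p_1)^2}{t_1}\right),$$
applied to the vectors $(\sqrt{\lambda t_0},\sqrt{(1-\lambda)t_1})$ and $(\sqrt{\lambda}\,B(p_0)/\sqrt{t_0},\sqrt{1-\lambda}\,B(p_1)/\sqrt{t_1})$, and divide by $t_\lambda$. Chaining the two splits gives $\tQ_{t_\lambda}g(x)\le\lambda\tQ_{t_0}g(x)+(1-\lambda)\tQ_{t_1}g(x)+\epsilon$, and letting $\epsilon\to0$ yields the convexity of $t\mapsto\tQ_tg(x)$.

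The argument is essentially just perspective-function convexity combined with convexity of $\cP(\cX)$, so I do not expect a serious obstacle; the only point needing a line of care is legitimizing the $\epsilon$-optimal selection, i.e. checking that $\tQ_tg(x)$ is finite rather than $-\infty$, so that the limit $\epsilon\to0$ is harmless. This is immediate since $A(p)+B(p)^2/t\ge A(p)\ge\inf_\cX g$, which is finite in the bounded regime where $\tQ_t$ is used. If one prefers to avoid the approximate-minimizer step altogether, the same computation goes through verbatim with genuine minimizers $p_0,p_1$ whenever the infimum is attained (e.g. for finite $\cX$), which I expect to be the intended setting.
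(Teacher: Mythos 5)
Your proposal is correct and follows essentially the same argument as the paper: the paper also takes the convex combination $p=\lambda p_1+(1-\lambda)p_2$ as the competitor at time $\lambda t+(1-\lambda)s$ and applies exactly the same Cauchy--Schwarz inequality expressing joint convexity of the quadratic-over-linear function, then concludes by taking the infimum over $p_1,p_2$ (equivalent to your $\epsilon$-optimal selection followed by $\epsilon\to0$). The only difference is presentational: your framing via perspective-function convexity and your explicit care about finiteness of the infimum are nice touches, but they do not change the substance of the proof.
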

\begin{proof}
Fix $x\in \cX$, define $G(t):=\tQ_tg(x)$
Observe that for any $\lambda\in [0,1]$, and $p_1, p_2\in \mathcal{P}(\cX)$, setting $p:=\lambda p_1+(1-\lambda) p_2\in \mathcal{P}(\cX)$ and applying the Cauchy-Schwarz inequality, it holds for all $t,s>0$:
\begin{multline}
\left(\int d(x,z)p(dz)\right)^2=\left(\lambda \int d(x,z)p_1(dz)+(1-\lambda)\int d(x,z)p_2(dz)\right)^2\\
\leq (\lambda t+(1-\lambda)s)\left(\frac{\lambda (\int d(x,z)p_1(dz))^2}{t}+\frac{(1-\lambda) (\int d(x,z)p_2(dz))^2}{s}\right)
\end{multline}
As a consequence, we get
\begin{multline}
 \lambda\left(\int g(z)p_1(dz) +\frac{1}{t}(\int d(x,z)p(dz))^2\right)+(1-\lambda)\left(\int g(z)p_2(dz) +\frac{1}{s}\left(\int d(x,z)p_2(dz)\right)^2\right)\\
 \geq \int g(z)p(dz) +\frac{1}{(\lambda t+(1-\lambda)s)}\left(\int d(x,z)p(dz)\right)^2
 \geq G(\lambda t+(1-\lambda) s)
\end{multline}
Taking the infimum over all $p_1,p_2\in \mathcal{P}(\cX)$ on left hand side of the inequality, the conclusion follows.
\end{proof}

The following lemma is a technical result connecting the gradient $\widetilde \nabla$ and $\Gamma$-operator.
\begin{lem}\label{Estimation of Gamma}
Let $\pi$ be the reversible probability measure for the Markov kernel $k$. For any bounded function $f$ and $g$ on $\cX$, the following inequalities hold:
\begin{description}
\item $(i)$ $\int \Gamma(f,g)d\pi\leq \sqrt{2}J\int |\widetilde{\nabla}g||\widetilde{\nabla}f|d\pi$,

\item $(ii)$ $|\int \Gamma(f,g)d\pi|\leq \sqrt{2J}\int |\widetilde{\nabla}g|\sqrt{\Gamma(f)}d\pi$.
\end{description}
Moreover, if we suppose that $f$ is non negative, then we have
\begin{description}
\item $(iii)$ $\int \Gamma(f,g)d\pi\leq 2\sqrt{2J}\int |\widetilde{\nabla}g|\sqrt{f\Gamma(\sqrt{f})}d\pi$,
\item $(iv)$  $\int \Gamma(\sqrt{f})d\pi \leq \frac{J}{4}\int |\widetilde{\nabla}\log f|^2 f d\pi.$
\end{description}
\end{lem}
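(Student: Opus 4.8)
The common engine for all four estimates is a symmetrization based on the reversibility of $\pi$. Writing everything out, $\int \Gamma(f,g)\,d\pi = \frac12\sum_{x,y}(f(y)-f(x))(g(y)-g(x))K(x,y)\pi(x)$, where the weight $K(x,y)\pi(x)$ is symmetric in $(x,y)$ by detailed balance. The first step I would carry out is to reduce each statement to a one-sided sum in which the $g$-increment enters only through its negative part $[g(y)-g(x)]_-$, which is then localized at $x$ via $[g(y)-g(x)]_-\le d(x,y)\,|\widetilde{\nabla}g|(x)$, straight from the definition of $|\widetilde{\nabla}g|$. For $(ii)$ I would pass to absolute values and write $|g(y)-g(x)|=[g(y)-g(x)]_-+[g(x)-g(y)]_-$; the swap $x\leftrightarrow y$ together with reversibility identifies the two resulting sums, giving $|\int\Gamma(f,g)\,d\pi|\le \sum_{x,y}|f(y)-f(x)|\,[g(y)-g(x)]_-K(x,y)\pi(x)$. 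For $(i)$ and $(iii)$ I would instead keep the signs: the cross terms $[f(y)-f(x)]_\pm[g(y)-g(x)]_\mp$ are nonpositive, so dropping them and symmetrizing again yields the cleaner bound $\int\Gamma(f,g)\,d\pi\le \sum_{x,y}[f(y)-f(x)]_-\,[g(y)-g(x)]_-K(x,y)\pi(x)$.

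From here each inequality is a short computation, using that on the graph $d(x,y)=1$ whenever $K(x,y)>0$ and $x\ne y$, and $\sum_{y\ne x}K(x,y)=J(x)\le J$. For $(i)$, I bound both negative parts by the gradients at $x$; the factor $\sum_{y}d(x,y)^2K(x,y)=J(x)\le J$ then gives even the constant $J$, which is stronger than the claimed $\sqrt2\,J$. For $(ii)$, I bound $[g(y)-g(x)]_-\le|\widetilde{\nabla}g|(x)$ and apply Cauchy--Schwarz in $y$ to the surviving $\sum_y|f(y)-f(x)|K(x,y)$, producing $\sqrt{2\Gamma(f)(x)}\,\sqrt{J(x)}$ and hence the constant $\sqrt{2J}$.

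The two nonlinear estimates $(iii)$ and $(iv)$ are where the sign bookkeeping pays off, and this is the part I expect to require the most care. For $(iii)$ I would start from the signed reduction, factor $f(y)-f(x)=(\sqrt{f(y)}-\sqrt{f(x)})(\sqrt{f(y)}+\sqrt{f(x)})$, and use the crucial point that on the support of $[f(y)-f(x)]_-$ one has $f(y)<f(x)$, so $\sqrt{f(y)}+\sqrt{f(x)}\le 2\sqrt{f(x)}$. After bounding $[g(y)-g(x)]_-$ by the gradient and applying Cauchy--Schwarz in $y$ to $\sum_y|\sqrt{f(y)}-\sqrt{f(x)}|K(x,y)$, this gives the factor $2\sqrt{f(x)}\,\sqrt{2\Gamma(\sqrt f)(x)}\,\sqrt{J(x)}$ and hence the constant $2\sqrt{2J}$. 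Had one instead passed to $|f(y)-f(x)|$, the term $\sqrt{f(y)}+\sqrt{f(x)}$ could not be replaced by $2\sqrt{f(x)}$ and an uncontrolled $\Gamma(\sqrt f)$ term would survive, so retaining the sign is essential.

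Finally, for $(iv)$ I would symmetrize $\int\Gamma(\sqrt f)\,d\pi$ into a sum over the pairs with $f(x)>f(y)$ (the diagonal contributes nothing), and reduce the whole statement to the pointwise elementary inequality $(\sqrt a-\sqrt b)^2\le \frac14(\log a-\log b)^2\max(a,b)$ for $a,b>0$. This in turn follows, after setting $r=b/a\le 1$, from the standard bound $\log r\le r-1$. On the region $f(x)>f(y)$ one has $\max(f(x),f(y))=f(x)$ and $\log f(x)-\log f(y)=[\log f(y)-\log f(x)]_-\le|\widetilde{\nabla}\log f|(x)$, and $\sum_{y\ne x}K(x,y)\le J$ then produces the constant $J/4$. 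The main obstacle overall is thus organizational: arranging the reversibility symmetrizations so that the $g$-gradient localizes at a single vertex while the sign constraint $f(y)<f(x)$ is preserved, which is precisely what lets the square-root factorization in $(iii)$ and the logarithmic inequality in $(iv)$ close with the stated constants.
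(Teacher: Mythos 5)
Your proposal is correct and takes essentially the same route as the paper's own proof: the same reversibility symmetrization reducing $\int\Gamma(f,g)\,d\pi$ to sums of negative-part increments localized by $|\widetilde{\nabla}g|(x)$, the same Cauchy--Schwarz step in $y$ for $(ii)$ and $(iii)$, the same sign-based bound $\sqrt{f(y)}+\sqrt{f(x)}\le 2\sqrt{f(x)}$ after the square-root factorization in $(iii)$, and your pointwise inequality $(\sqrt a-\sqrt b)^2\le\frac14(\log a-\log b)^2\max(a,b)$ in $(iv)$ is exactly the paper's convexity bound $(a-b)e^a\ge e^a-e^b$ written for the logarithm (the paper substitutes $f=e^g$). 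The only point you pass over is the degenerate case in $(iv)$ where $f$ vanishes at some vertex, which the paper dispatches in one line by noting that the right-hand side is then infinite.
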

\begin{proof}
The proofs of these four inequalities all follow similar arguments.
Denote the positive part and negative part of a function $u$ as $u_+$ and $u_-$ respectively.

$(i)$:
Using the relation $(uv)_+\leq u_+v_++u_-v_-$, we have
\begin{align*}
\int \Gamma(f,g)_+d\pi
&=
\frac{1}{2}\sum_x\left[\sum_{y\sim x}(f(y)-f(x))(g(y)-g(x))\right]_+K(x,y)\pi(x)\\
&\leq
\frac{1}{2}\sum_x\sum_{y\sim x}(g(y)-g(x))_+(f(y)-f(x))_+K(x,y)\pi(x)\\
&+\frac{1}{2}\sum_x\sum_{y\sim x}(g(y)-g(x))_-(f(y)-f(x))_-K(x,y)\pi(x)
\end{align*}
Now by reversibility of the measure $\pi$, it holds
\begin{align*}
&\ \ \sum_x\sum_{x}(g(y)-g(x))_+(f(y)-f(x))_+K(x,y)\pi(x)\\
&=\sum_x\sum_{y\sim x}(g(y)-g(x))_-(f(y)-f(x))_-K(x,y)\pi(x)\\
&\leq \sum_x |\widetilde{\nabla}g|(x) \sum_{y\sim x}(f(x)-f(y))_-K(x,y)\pi(x),
\end{align*}
Where the latter inequality follows from $|\widetilde{\nabla}g|(x)\geq (g(y)-g(x))_-$ for all $y\sim x$.
Therefore, we get
\begin{equation}\label{eq9}
\int \Gamma(f,g)_+d\pi
\leq
\sum_x |\widetilde{\nabla}g| \sum_{y\sim x}(f(y)-f(x))_-K(x,y)\pi(x).
\end{equation}
In \eqref{eq9}, using $\Gamma(f,g)\leq \Gamma(f,g)_+$ and $\sum_{y\sim x}(f(y)-f(x))_-K(x,y)\leq |\widetilde \nabla f|(x)J(x)$, we get $(i)$.

$(ii)$:
 By the Cauchy-Schwarz inequality, it holds
\begin{equation}\label{eq10}
\left(\sum_{y\sim x} (f(y)-f(x))_-K(x,y)\right)^2\leq J(x)\sum_{y\sim x} (f(y)-f(x))_-^2K(x,y)\leq 2J\Gamma(f)
\end{equation}
Combining \eqref{eq9} and \eqref{eq10} leads to
\begin{equation} \label{eq7}
\int \Gamma(f,g)_+d\pi \leq
\int |\widetilde{\nabla}g|\sqrt{2J\Gamma(f)}d\pi.
\end{equation}
Following a similar argument, we have
\begin{equation}\label{eq8}
\int \Gamma(f,g)_-d\pi\leq \int |\widetilde{\nabla}g|\sqrt{2J\Gamma(f)}d\pi.
\end{equation}
and $(ii)$ follows by \eqref{eq7}, \eqref{eq8} and the inequality
$$\left|\int \Gamma(f,g)d\pi\right|\leq \max\left\{\int \Gamma(f,g)_+d\pi,\int \Gamma(f,g)_-d\pi\right\}.$$

$(iii)$:
Since $f$ is nonnegative, $\sqrt{f}$ is well defined. Then it holds
\begin{align*}
\int \Gamma(f,g)d\pi
&=
\frac{1}{2}\sum_x\sum_{y\sim x}(f(y)-f(x))(g(y)-g(x))K(x,y)\pi(x)\\
&= \frac{1}{2}\sum_x\sum_{y\sim x}(g(y)-g(x))(\sqrt{f}(y)-\sqrt{f}(x))(\sqrt{f}(y)+\sqrt{f}(x))K(x,y)\pi(x)
\end{align*}
Now arguing as in $(i)$ and $(ii)$, by reversibility of $\pi$, we get
$$
\int \Gamma(f,g)d\pi =\sum_x\sum_{y\sim x}(g(y)-g(x))_-(\sqrt{f}(y)-\sqrt{f}(x))_-(\sqrt{f}(y)+\sqrt{f}(x))K(x,y)\pi(x).
$$
Notice that $(\sqrt{f}(y)-\sqrt{f}(x))_-(\sqrt{f}(y)+\sqrt{f}(x))\leq (\sqrt{f}(y)-\sqrt{f}(x))_-2\sqrt{f}(x)$, we have
\begin{align*}
\int \Gamma(f,g)d\pi
&\leq 2\sum_x\sum_{y\sim x}(g(y)-g(x))_-(\sqrt{f}(y)-\sqrt{f}(x))_-\sqrt{f}(x)K(x,y)\pi(x)\\
&\leq 2\sqrt{2J}\int |\widetilde{\nabla}g|\sqrt{f\Gamma(\sqrt{f})}d\pi
\end{align*}
where the last step we have used \eqref{eq10} with $u:=\sqrt{f}$.

$(iv)$:
If $f$ is the null function, there is nothing to say. Otherwise, if there exist $x,y\in \cX$ such that $f(x)=0,f(y)>0$, it is easy to see that $|\widetilde{\nabla} \log f(y)|^2f(y)\pi(y)=\infty$. So we only need to prove the case $f(x)>0$ for all $x\in\cX$.

Since $f$ is a positive function, one can rewrite $f=e^g$, it is enough to prove that
$$\int \Gamma(e^{g/2})d\pi \leq \frac{J}{4}\int |\widetilde{\nabla}g|^2e^gd\pi$$
holds for all function $g$.
In fact, by convexity of function $x\mapsto e^x$, we have for all $a>b$, $(a-b)e^a\geq e^a-e^b$. Thus
\begin{align*}
J\int |\widetilde{\nabla}g|^2e^g
&\geq \sum_{x\sim y;g(y)\leq g(x)}(g(x)-g(y))^2e^{g(x)}K(x,y)\pi(x)\\
&\geq 4\sum_{x\sim y;g(y)\leq g(x)}\left(e^{\frac{g(x)}{2}}-e^{\frac{g(y)}{2}}\right)^2K(x,y)\pi(x)\\
&=4\int \Gamma(e^{g/2})d\pi
\end{align*}
\end{proof}

\section{Transport inequalities for Markov chains satisfying CD$(\kappa, \infty)$}
\label{sect_CD}
In this section, we assume that the Markov chain satisfies the curvature condition CD$(\kappa,\infty)$ for some $\kappa>0$.
One of the main tools we shall use is the following sub-commutation relation between $\Gamma$ and the semigroup $P_t$, which was obtained in \cite{KKRT14}:
\begin{lem}  \label{prop_kkrt}
Assume that CD$(\kappa$, $\infty)$ holds. Then for any $f : \cX \longrightarrow \R$, we have
$$\Gamma(P_tf) \leq e^{-2\kappa t}P_t\Gamma(f).$$
\end{lem}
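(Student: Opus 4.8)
The plan is to run the classical Bakry--\'Emery interpolation argument, which relies only on the semigroup property, the commutation $LP_s = P_s L$, the defining inequality $\Gamma_2 \geq \kappa\Gamma$, and the fact that $P_s$ preserves nonnegativity; crucially it never invokes the chain rule, so it transfers verbatim to the discrete setting. Fix $t > 0$ and introduce the interpolation
$$\Lambda(s) := e^{-2\kappa s}\, P_s\big(\Gamma(P_{t-s}f)\big), \qquad s \in [0,t].$$
The endpoints are exactly the two sides we wish to compare: $\Lambda(0) = \Gamma(P_t f)$ and $\Lambda(t) = e^{-2\kappa t} P_t \Gamma(f)$, using $P_0 = \mathrm{Id}$. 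Hence it suffices to show that $\Lambda$ is nondecreasing on $[0,t]$.

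To compute $\Lambda'$, write $g_s := P_{t-s} f$, so that $\partial_s g_s = -L g_s$. Differentiating and using the bilinearity and symmetry of $\Gamma$, so that $\partial_s \Gamma(g_s) = 2\Gamma(g_s, \partial_s g_s) = -2\Gamma(g_s, Lg_s)$, together with $\partial_s P_s = P_s L = L P_s$, one obtains
$$\frac{d}{ds}\, P_s \Gamma(g_s) = P_s\big(L\Gamma(g_s) - 2\Gamma(g_s, Lg_s)\big) = 2\, P_s \Gamma_2(g_s),$$
where the last equality is just the definition $\Gamma_2(g) = \tfrac12 L\Gamma(g) - \Gamma(g, Lg)$. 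Accounting for the exponential prefactor then gives
$$\Lambda'(s) = 2\, e^{-2\kappa s}\, P_s\big(\Gamma_2(g_s) - \kappa\, \Gamma(g_s)\big).$$
Now the curvature hypothesis CD$(\kappa,\infty)$ asserts precisely that $\Gamma_2(g_s) - \kappa\Gamma(g_s) \geq 0$ pointwise, and since $P_s$ is a Markov semigroup it is order-preserving; therefore $\Lambda'(s) \geq 0$. Integrating from $0$ to $t$ yields $\Lambda(0) \leq \Lambda(t)$, which is the claimed inequality.

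The only points requiring care are the manipulations under the semigroup: differentiating $s \mapsto P_s \Gamma(g_s)$, interchanging $L$ with $P_s$, and the positivity of $P_s$. When $\cX$ is finite these are immediate, since everything reduces to finite-dimensional linear algebra and $P_s = e^{sL}$ has nonnegative off-diagonal structure making it order-preserving. In the countably infinite case the same identities hold once one verifies the necessary summability and domain conditions for the generator, which I expect to be routine under the standing assumptions on $K$. I anticipate that the main (mild) obstacle is exactly this analytic bookkeeping rather than any genuinely new difficulty, precisely because the discrete loss of the chain rule plays no role in this particular argument.
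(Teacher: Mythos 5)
Your proof is correct: the paper itself does not prove this lemma but cites it from \cite{KKRT14}, and your interpolation argument (differentiating $s \mapsto e^{-2\kappa s}P_s\Gamma(P_{t-s}f)$ and invoking CD$(\kappa,\infty)$ plus positivity of $P_s$) is exactly the standard proof given there. Your closing caveat is also on point: in the finite case the computation is pure linear algebra, and only in the countably infinite case does one need the routine domain/summability bookkeeping, which the paper likewise leaves implicit.
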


\begin{rque}
This property implies that if $f$ is $1$-Lipschitz for $d_{\Gamma}$, then $P_tf$ is $e^{-\kappa t}$-Lipschitz. Therefore, the condition CD$(\kappa,\infty)$ implies that the coarse Ricci curvature of the Markov chain, using the distance $d_{\Gamma}$, is bounded from below by $\kappa$.
\end{rque}

\subsection{$L^1$-transport inequalities}
Here we shall prove two inequalities connected to the $L^1$ Wasserstein distance  under CD$(\kappa,\infty)$.
\begin{proof}[Proof of Theorem \ref{CDW1I}]
The proof relies on the Kantorovitch-Rubinstein duality formula
$$W_{1,d_\Gamma}(\mu, \nu) = \underset{g \hspace{1mm} 1-lip}{\sup} \hspace{1mm} \int {gd\mu} - \int{g d\nu}.$$
Let $g$ be a $1$-Lipschitz function for $d_{\Gamma}$. This is equivalent to having $\Gamma(g) \leq 1$.

First, using the Cauchy-Schwartz inequality, it holds
\begin{align*}
-\int{\Gamma(P_tg, f)d\pi}
&=
-\frac{1}{2}\underset{x,y}{\sum} \hspace{1mm} (P_tg(y) - P_tg(x))(f(y) - f(x))K(x,y)\pi(x)\\
&=
\frac{1}{2}{\underset{x,y}{\sum} \hspace{1mm} |(P_tg(y) - P_tg(x))(\sqrt{f}(y) - \sqrt{f}(x))(\sqrt{f}(y) + \sqrt{f}(x))|K(x,y)\pi(x)}\\
&\leq\sum_x\pi(x)\Gamma(\sqrt{f})(x)^{\frac{1}{2}}\left(\sum_y(P_tg(y) - P_tg(x))^2(\sqrt{f}(y) + \sqrt{f}(x))^2K(x,y)\right)^{\frac{1}{2}},
\end{align*}
Now applying the Cauchy-Schwartz inequality again, the latter quantity is less than
$$\left(\int\Gamma (\sqrt{f})d\pi\right)^{\frac{1}{2}}\left(\underset{x,y}{\sum} \hspace{1mm} (P_tg(y) - P_tg(x))^2(\sqrt{f}(y) + \sqrt{f}(x))^2K(x,y)\pi(x)\right)^{\frac{1}{2}}.$$
Therefore, we have
\begin{align*}
-\int{\Gamma(P_tg, f)d\pi}
&\leq \left(\int \Gamma (\sqrt{f})d\pi\right)^{\frac{1}{2}}\left(\underset{x,y}{\sum} \hspace{1mm} (P_tg(y) - P_tg(x))^2(\sqrt{f}(y) + \sqrt{f}(x))^2K(x,y)\pi(x)\right)^{\frac{1}{2}}\\
&\leq \sqrt{2}\sqrt{\I_{\pi}(f)}\sqrt{\int \Gamma(P_tg)fd\pi},
\end{align*}
where the last step we have used the reversibility of the measure $\pi$ and the fact that $(\sqrt{f}(y) + \sqrt{f}(x))^2\leq 2(f(x)+f(y))$ for any nonnegative function $f$.

Therefore, according to Lemma \ref{prop_kkrt}, we have
\begin{align*}
\int{gd\pi} &- \int{g fd\pi} = \int_0^{+\infty}{\frac{d}{dt}\int{(P_tg)f d\pi}dt} \\
&= -\int_0^{+\infty}{\int{\Gamma(P_tg, f)d\pi}dt} \\
&\leq \int_0^{+\infty}{\sqrt{\I_{\pi}(f)}\sqrt{\int \Gamma(P_tg)fd\pi}dt} \\
&\leq \sqrt{2}\sqrt{\I_{\pi}(f)} \int_0^{+\infty}{e^{-\kappa t} \sqrt{\int P_t(\Gamma(g))f d\pi}dt} \\
&\leq \frac{\sqrt{2}}{\kappa}\sqrt{\I_{\pi}(f)}.
\end{align*}
The result immediately follows by taking the supremum over all $1$-Lipschitz functions $g$.
\end{proof}

Using \eqref{d_and_d_gamma}, we get the following corollary:
\begin{cor}\label{corW2ICD}
Assume that $CD(\kappa,\infty)$ holds with $\kappa>0$. Then $\pi$ satisfies the transport-information inequality $T_1I$ with constant $\kappa$. More precisely, for all probability measure $\nu:= f\pi$ on $\cX$, it holds
$$W_{1,d_g}(f\pi,\pi)^2\leq \frac{J}{\kappa^2}\I_\pi(f) \leq \frac{1}{\kappa^2}\I_\pi(f).$$
\end{cor}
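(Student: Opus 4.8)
The plan is to deduce this corollary directly from Theorem \ref{CDW1I} by transferring the transport estimate from the distance $d_\Gamma$ to the graph distance $d_g$ via the pointwise comparison \eqref{d_and_d_gamma2}. The key observation is that a pointwise inequality between two distances descends to an inequality between the associated $L^1$-Wasserstein distances, essentially for free, by comparing the cost of the same coupling under the two metrics.

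Concretely, first I would recall that \eqref{d_and_d_gamma2} yields the pointwise bound $d_g(x,y) \leq \sqrt{J/2}\, d_\Gamma(x,y)$ for all $x,y \in \cX$, using $J = \sup_x J(x)$ to uniformize the state-dependent constant $\min\{\sqrt{J(x)/2},\sqrt{J(y)/2}\}$. Next I would let $\pi^*$ denote an optimal coupling of $f\pi$ and $\pi$ for the cost $d_\Gamma$; integrating the pointwise bound against $\pi^*$ gives
$$W_{1,d_g}(f\pi,\pi) \leq \int d_g(x,y)\,\pi^*(dxdy) \leq \sqrt{\tfrac{J}{2}}\int d_\Gamma(x,y)\,\pi^*(dxdy) = \sqrt{\tfrac{J}{2}}\,W_{1,d_\Gamma}(f\pi,\pi),$$
where the first inequality is simply that $W_{1,d_g}$ is an infimum over couplings and $\pi^*$ is one admissible choice.

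Squaring this and invoking Theorem \ref{CDW1I}, which provides $W_{1,d_\Gamma}(f\pi,\pi)^2 \leq \tfrac{2}{\kappa^2}\I_\pi(f)$, I would obtain
$$W_{1,d_g}(f\pi,\pi)^2 \leq \frac{J}{2}\,W_{1,d_\Gamma}(f\pi,\pi)^2 \leq \frac{J}{2}\cdot\frac{2}{\kappa^2}\I_\pi(f) = \frac{J}{\kappa^2}\I_\pi(f).$$
Finally, since the chain is irreducible we have $0 < J \leq 1$, so $\tfrac{J}{\kappa^2}\I_\pi(f) \leq \tfrac{1}{\kappa^2}\I_\pi(f)$, which gives the second stated inequality and completes the proof.

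This corollary is genuinely routine given Theorem \ref{CDW1I}, so I do not anticipate a real obstacle; the only point requiring the slightest care is the monotonicity step $W_{1,d_g} \leq \sqrt{J/2}\,W_{1,d_\Gamma}$, where one must remember to use an optimal (or near-optimal) coupling for $d_\Gamma$ rather than for $d_g$, since the comparison of costs goes in the correct direction only that way.
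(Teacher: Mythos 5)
Your proposal is correct and is exactly the argument the paper intends: the paper deduces this corollary from Theorem \ref{CDW1I} simply by invoking \eqref{d_and_d_gamma}, leaving implicit the coupling-comparison step $W_{1,d_g} \leq \sqrt{J/2}\,W_{1,d_\Gamma}$ that you spell out. Your care about using a (near-)optimal coupling for $d_\Gamma$ is the right way to make that implicit step rigorous.
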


Using, similar arguments, we can also prove the following Cheeger-type inequality:
\begin{prop} \label{lem_transport_gradient}
Assume that CD$(\kappa$, $\infty)$ holds. Then for any probability density $f$ with respect to $\pi$, we have
$$W_{1, d_{\Gamma}}(f\pi, \pi) \leq \frac{1}{\kappa}\int{\sqrt{\Gamma(f)}d\pi}.$$
\end{prop}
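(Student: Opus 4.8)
The plan is to follow the same strategy as in the proof of Theorem~\ref{CDW1I}, but to replace the two-step Cauchy--Schwarz argument there (which introduced the factors $\sqrt f$ and produced $\sqrt{\I_\pi(f)}$) by a single \emph{pointwise} Cauchy--Schwarz inequality on $\Gamma$. This is exactly what converts the Fisher information into the Cheeger-type quantity $\int\sqrt{\Gamma(f)}\,d\pi$.

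First I would start from the Kantorovich--Rubinstein duality for $W_{1,d_\Gamma}$: since the $1$-Lipschitz functions for $d_\Gamma$ are precisely those with $\Gamma(g)\leq 1$,
$$W_{1,d_\Gamma}(f\pi,\pi)=\sup_{g:\,\Gamma(g)\leq 1}\left(\int g f\,d\pi-\int g\,d\pi\right).$$
Fix such a $g$. As in Theorem~\ref{CDW1I}, I would write the difference as a time integral along the semigroup. Using $P_tg\to\int g\,d\pi$ as $t\to\infty$ together with the integration-by-parts identity $\frac{d}{dt}\int(P_tg)f\,d\pi=-\int\Gamma(P_tg,f)\,d\pi$, one gets
$$\int g f\,d\pi-\int g\,d\pi=\int_0^{+\infty}\int\Gamma(P_tg,f)\,d\pi\,dt.$$

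Next I would estimate the integrand by the pointwise Cauchy--Schwarz inequality $\Gamma(P_tg,f)(x)\leq\sqrt{\Gamma(P_tg)(x)}\,\sqrt{\Gamma(f)(x)}$, giving
$$\int\Gamma(P_tg,f)\,d\pi\leq\int\sqrt{\Gamma(P_tg)}\,\sqrt{\Gamma(f)}\,d\pi.$$
The key input is then the sub-commutation Lemma~\ref{prop_kkrt}: since $\Gamma(g)\leq1$ and $P_t$ is positivity preserving with $P_t1=1$, we obtain $\Gamma(P_tg)\leq e^{-2\kappa t}P_t\Gamma(g)\leq e^{-2\kappa t}$, hence $\sqrt{\Gamma(P_tg)}\leq e^{-\kappa t}$ uniformly in $x$. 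Pulling this factor out of the integral yields $\int\Gamma(P_tg,f)\,d\pi\leq e^{-\kappa t}\int\sqrt{\Gamma(f)}\,d\pi$.

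Finally, integrating in $t$ and using $\int_0^{+\infty}e^{-\kappa t}\,dt=1/\kappa$ gives $\int g f\,d\pi-\int g\,d\pi\leq\frac1\kappa\int\sqrt{\Gamma(f)}\,d\pi$, and taking the supremum over all $g$ with $\Gamma(g)\leq1$ produces the claimed inequality. I do not expect any serious obstacle: the argument is strictly simpler than that of Theorem~\ref{CDW1I}. The only points requiring a little care are the uniform bound $\sqrt{\Gamma(P_tg)}\leq e^{-\kappa t}$, which combines Lemma~\ref{prop_kkrt} with the contraction property $P_t\Gamma(g)\leq P_t1=1$, and the boundary behaviour of the time integral, namely the convergence of $P_tg$ to its $\pi$-mean, which is guaranteed by irreducibility of the kernel.
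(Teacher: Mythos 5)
Your proposal is correct and follows essentially the same route as the paper's own proof: Kantorovich duality for $W_{1,d_\Gamma}$, semigroup interpolation, the pointwise Cauchy--Schwarz bound $|\Gamma(P_tg,f)|\leq\sqrt{\Gamma(P_tg)}\sqrt{\Gamma(f)}$, Lemma~\ref{prop_kkrt} combined with $P_t\Gamma(g)\leq 1$, and integration of $e^{-\kappa t}$ in time. The only difference is cosmetic (you bound $\sqrt{\Gamma(P_tg)}\leq e^{-\kappa t}$ uniformly at once, while the paper keeps $\sqrt{P_t\Gamma(g)}$ inside the integral until the last step), and your sign bookkeeping in the interpolation identity is in fact cleaner than the paper's.
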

We call this a Cheeger-type inequality, by analogy with the classical Cheeger inequality
$$||f\pi - \pi||_{TV} \leq C\int{|\nabla f|d\pi}.$$
Here $\int{\sqrt{\Gamma(f)}d\pi}$ is an $L^1$ estimate on the gradient of $f$, while both $W_{1, d_{\Gamma}}(f\pi, \pi)$ and $||f\pi - \pi||_{TV}$ are distances of $L^1$ nature.

\begin{proof}
Once more, by Kantorovitch duality for $W_{1, d_\Gamma}$, and since $1$-Lipschitz functions are exactly the functions $g$ with $\Gamma(g) \leq 1$, we have
\begin{align*}
W_{1, d_{\Gamma}}(f\pi, \pi) &= \underset{g; \Gamma(g) \leq 1}{\sup} \hspace{1mm} \int{gfd\pi} - \int{gd\pi} \\
&= \underset{g; \Gamma(g) \leq 1}{\sup} \hspace{1mm}- \int_0^{+\infty}{\int{\Gamma(P_tg, f)d\pi}dt} \\
&\leq \underset{g; \Gamma(g) \leq 1}{\sup} \hspace{1mm} \int_0^{+\infty}{\int{\sqrt{\Gamma(P_tg)}\sqrt{\Gamma(f)}d\pi}dt} \\
&\leq \underset{g; \Gamma(g) \leq 1}{\sup} \hspace{1mm} \int_0^{+\infty}{e^{-\kappa t}\int{\sqrt{P_t\Gamma(g)}\sqrt{\Gamma(f)}d\pi}dt} \\
&\leq \frac{1}{\kappa}\int{\sqrt{\Gamma(f)}d\pi}
\end{align*}
\end{proof}
\begin{rque}
Again, by \eqref{d_and_d_gamma}, with assumption CD$(\kappa,\infty)$,we get
$$W_{1, d_g}(f\pi,\pi)\leq \sqrt{\frac{J}{2\kappa^2}}\int{\sqrt{\Gamma(f)}d\pi}.$$
\end{rque}

\subsection{$L^2$-transport inequalities}

Under condition CD$(\kappa,\infty)$, we have not been able to obtain a transport-entropy inequality involving a weak transport cost. However, we can still obtain a bound on $\tW_{2,d_g}(\pi|f\pi)^2$ with the Dirichlet energy.

\begin{prop}
Assume that CD$(\kappa$, $\infty)$ holds. Then for any probability density $f$ with respect to $\pi$, we have
$$\tW_{2,d_g}(\pi|f\pi)^2 \leq \frac{\sqrt{2}J}{\kappa^2}\int \Gamma(f)d\pi.$$
\end{prop}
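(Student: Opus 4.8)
The plan is to transpose the argument of Theorem~\ref{CDW1I} and Proposition~\ref{lem_transport_gradient} to the weak cost, replacing Kantorovich--Rubinstein duality by the duality formula \eqref{duality} and the $1$-Lipschitz test functions by the Hamilton--Jacobi flow $\tQ$. By \eqref{duality} one has $\tW_{2,d_g}(\pi|f\pi)^2=\sup_g\{\int \tQ_1 g\,d\pi-\int g\,f\,d\pi\}$, so it is enough to bound $\int \tQ_1 g\,d\pi-\int g\,f\,d\pi$ for each fixed bounded $g$. I would introduce the coupled interpolation $u_t:=\tQ_{\alpha(t)}g$, with a smooth increasing time-change $\alpha$ satisfying $\alpha(0)=0$ and $\alpha(\infty)=1$, and set $\Lambda(t):=\int u_t\,P_tf\,d\pi=\int u_t\,d\bigl(P_t^*(f\pi)\bigr)$. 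Since $\tQ_0g=g$ and $P_t^*(f\pi)\to\pi$, the endpoints give $\Lambda(0)=\int g f\,d\pi$ and $\Lambda(\infty)=\int \tQ_1 g\,d\pi$, so that the quantity to estimate equals $\int_0^{\infty}\Lambda'(t)\,dt$.

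Then I would differentiate. By reversibility of $\pi$ the drift part of $\Lambda'(t)$ equals $\int u_t\,L(P_tf)\,d\pi=-\int\Gamma(u_t,P_tf)\,d\pi$, while for the $\tQ$ part the Hamilton--Jacobi inequality \eqref{HJ}, $\partial_s\tQ_s g\le-\tfrac14|\widetilde{\nabla}\tQ_s g|^2$, together with $\alpha'\ge0$ and $P_tf\ge0$, yields $\int\dot u_t\,P_tf\,d\pi\le-\tfrac{\alpha'(t)}4\int|\widetilde{\nabla}u_t|^2\,P_tf\,d\pi$. Thus $\Lambda'(t)\le-\int\Gamma(u_t,P_tf)\,d\pi-\tfrac{\alpha'(t)}4\int|\widetilde{\nabla}u_t|^2\,P_tf\,d\pi$. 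To make the Dirichlet energy of $f$ appear, I would bound the drift term by Cauchy--Schwarz applied to the bilinear form, $|\int\Gamma(u_t,P_tf)\,d\pi|\le(\int\Gamma(u_t)\,d\pi)^{1/2}(\int\Gamma(P_tf)\,d\pi)^{1/2}$, and then use the sub-commutation Lemma~\ref{prop_kkrt} together with the invariance of $\pi$ in the clean form $\int\Gamma(P_tf)\,d\pi\le e^{-2\kappa t}\int P_t\Gamma(f)\,d\pi=e^{-2\kappa t}\int\Gamma(f)\,d\pi$. This produces the favourable factor $e^{-\kappa t}$ and leaves $\Lambda'(t)\le e^{-\kappa t}(\int\Gamma(u_t)\,d\pi)^{1/2}(\int\Gamma(f)\,d\pi)^{1/2}-\tfrac{\alpha'(t)}4\int|\widetilde{\nabla}u_t|^2\,P_tf\,d\pi$.

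Finally I would combine the two contributions. Using the reversibility comparison between $\Gamma$ and $\widetilde{\nabla}$ underlying Lemma~\ref{Estimation of Gamma}, namely $\int\Gamma(u_t)\,d\pi\le J\int|\widetilde{\nabla}u_t|^2\,d\pi$, and a Young inequality with a time-dependent parameter, I would absorb the $g$-dependent term $\int\Gamma(u_t)\,d\pi$ into the Hamilton--Jacobi dissipation; choosing the extremal time-change $\alpha'(t)\propto e^{-\kappa t}$ (so that $\int_0^\infty e^{-2\kappa t}/\alpha'(t)\,dt$ is of order $\kappa^{-2}$) and integrating in $t$ should leave a multiple of $\tfrac{1}{\kappa^{2}}\int\Gamma(f)\,d\pi$. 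The point where all the difficulty concentrates is this last reconciliation: the Hamilton--Jacobi dissipation carries the \emph{flowed} weight $P_tf$ on $|\widetilde{\nabla}u_t|^2$, whereas the curvature/Cauchy--Schwarz step produces the \emph{unweighted} Dirichlet energy $\int\Gamma(u_t)\,d\pi$, and $P_tf$ cannot be bounded below uniformly. Matching these two weights uniformly over all admissible $g$ is the technical heart of the argument, and it is precisely in performing this comparison (and the attendant passage from $\Gamma$ to $|\widetilde{\nabla}\cdot|^2$) that the factor $\sqrt2\,J$ of the announced constant is generated.
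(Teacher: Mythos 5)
Your setup — duality formula \eqref{duality}, the coupled interpolation $\Lambda(t)=\int \tQ_{\alpha(t)}g\,P_tf\,d\pi$, the differentiation splitting into a drift term and a Hamilton--Jacobi term — is sound, and it is exactly the strategy the paper uses for Theorem \ref{thmW2I} under CDE$'$. But the final step, absorbing $\int\Gamma(u_t)\,d\pi$ into the dissipation $\frac{\alpha'(t)}{4}\int|\widetilde{\nabla}u_t|^2P_tf\,d\pi$, is not a technical detail left to arrange: it fails, for the reason you yourself identify. The Hamilton--Jacobi dissipation carries the weight $P_tf$, which is exponentially small (for small $t$) wherever $f$ vanishes, while the term produced by the unweighted Cauchy--Schwarz inequality is integrated against $\pi$; taking $g$ so that $|\widetilde{\nabla}\tQ_{\alpha(t)}g|$ is concentrated where $P_tf$ is small shows that $\int|\widetilde{\nabla}u_t|^2P_tf\,d\pi$ gives no control whatsoever on $\int\Gamma(u_t)\,d\pi$, uniformly in $g$. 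No choice of time-change $\alpha$ or Young parameter can bridge this. This weight mismatch is precisely why the coupled-flow argument needs the CDE$'$ commutations of Lemma \ref{commutaionP_tsqrt} (which preserve the weight, e.g. $\Gamma(\sqrt{P_tf})\leq e^{-2\kappa_e t}P_t\Gamma(\sqrt{f})$, used through part $(iii)$ of Lemma \ref{Estimation of Gamma}); under CD$(\kappa,\infty)$ alone, the only available sub-commutation is Lemma \ref{prop_kkrt} for $\Gamma(P_tf)$ itself, which forces the unweighted Cauchy--Schwarz step and creates the mismatch.

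The paper's proof gets around this by \emph{decoupling} the two flows instead of coupling them. It writes
$$\int \tQ g\,f\,d\pi-\int g\,d\pi=\Bigl(\int \tQ g\,f\,d\pi-\int \tQ g\,d\pi\Bigr)+\Bigl(\int \tQ g\,d\pi-\int g\,d\pi\Bigr).$$
The first bracket is a pure semigroup interpolation of the \emph{fixed} function $\tQ g=\tQ_1g$, bounded via Cauchy--Schwarz and Lemma \ref{prop_kkrt} by $\int_0^{\infty}e^{-\kappa t}\int\sqrt{P_t\Gamma(\tQ g)}\sqrt{\Gamma(f)}\,d\pi\,dt$. For the second bracket, the convexity of $t\mapsto\tQ_tg$ (Proposition \ref{convexity of Q}) bounds the whole increment by the endpoint derivative, hence by $-\frac{1}{4}\int|\widetilde{\nabla}\tQ_1g|^2\,d\pi$: a dissipation measured against $\pi$ itself, with no weight $P_tf$ anywhere. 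Lemma \ref{Estimation of Gamma}$(i)$ converts it to $-\frac{1}{4\sqrt{2}J}\int\Gamma(\tQ g)\,d\pi$, and then — this is the matching trick you are missing — the invariance of $\pi$ allows one to rewrite this constant-in-time quantity as $-\int_0^{\infty}\kappa e^{-\kappa t}\frac{1}{4\sqrt{2}J}\int P_t\Gamma(\tQ g)\,d\pi\,dt$. The integrand now has exactly the same form as the bound on the first bracket, so the pointwise Young inequality $\sqrt{P_t\Gamma(\tQ g)}\sqrt{\Gamma(f)}\leq\frac{\kappa}{4\sqrt{2}J}P_t\Gamma(\tQ g)+\frac{\sqrt{2}J}{\kappa}\Gamma(f)$ absorbs the $g$-dependent term entirely and leaves $\frac{\sqrt{2}J}{\kappa^2}\int\Gamma(f)\,d\pi$. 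Note finally that the decomposition must be done in this order (semigroup acting on $\tQ g$, so that both brackets involve $\Gamma(\tQ g)$); it therefore estimates the dual expression $\int\tQ g\,f\,d\pi-\int g\,d\pi$ rather than the one you wrote, which is the form the paper actually bounds before invoking \eqref{duality}.
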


Unlike the transport-information inequality, this inequality does not seem to imply a transport-entropy inequality, and does not seem to be directly related to concentration inequalities.

\begin{proof}
First, for any bounded continuous function $g$ on $\cX$, we have:
\begin{multline}\label{eq1}
\int \tQ g fd\pi-\int \tQ g d\pi=-\int_0^{+\infty}\frac{d}{dt}\int P_t(\tQ g)fd\pi dt\\
=-\int_0^{+\infty}\int{\Gamma(P_t(\tQ g), f)d\pi}dt
=\int_0^{+\infty}\int \sqrt{\Gamma(P_t(\tQ g))}\sqrt{\Gamma(f)}d\pi dt
\end{multline}
According to Lemma \ref{prop_kkrt}, CD$(\kappa,\infty)$ implies that $\Gamma(P_t(g))\leq e^{-2\kappa t}P_t(\Gamma(g))$ holds for all $g$.
Hence,
\begin{equation}\label{eq2}
\int_0^{+\infty}\int \sqrt{\Gamma(P_t(\tQ g))}\sqrt{\Gamma(f)}d\pi dt
\leq \int_0^{+\infty} e^{-\kappa t}\int \sqrt{P_t\Gamma((\tQ g))}\sqrt{\Gamma(f)}d\pi dt
\end{equation}
On the other hand, by Lemma \ref{convexity of Q} and \eqref{HJ}, it holds
\begin{multline}\label{eq3}
\int \tQ g d\pi-\int g d\pi=\int_0^1 \int \frac{d}{dt} \tQ_t g d\pi dt\\
\leq\int_0^1 \int \frac{d}{dt} \tQ_t g|_{t=1} d\pi dt
\leq -\frac{1}{4}\int |\widetilde{\nabla} \tQ g|^2 d\pi
\end{multline}
Now applying part $(i)$ of Proposition \ref{Estimation of Gamma}, we get
\begin{multline}\label{eq4}
  -\frac{1}{4}\int |\widetilde{\nabla} \tQ g|^2 d\pi \leq - \frac{1}{4\sqrt{2}J}\int \Gamma (\tQ g) d\pi \\
  =- \frac{1}{4\sqrt{2}J}\int P_t\Gamma (\tQ g) d\pi =\int_0^{+\infty}e^{-\kappa t}\int -\frac{\kappa}{4\sqrt{2}J} P_t\Gamma (\tQ g) d\pi dt
\end{multline}
Combining \eqref{eq1}, \eqref{eq2}, \eqref{eq3} and \eqref{eq4}, we have
\begin{align*}
\int \tQ g fd\pi-\int g d\pi
&=
\int \tQ g fd\pi-\int \tQ g d\pi+\int \tQ g d\pi-\int g d\pi
\\
&\leq
\int_0^{+\infty}e^{-\kappa t}\left(\int \sqrt{P_t\Gamma((\tQ g))}\sqrt{\Gamma(f)}-\frac{\kappa}{4\sqrt{2}J} P_t\Gamma (\tQ g) d\pi\right) dt\\
&\leq
\int_0^{+\infty}e^{-\kappa t}\int \frac{\sqrt{2}J}{\kappa}\Gamma(f) d\pi dt \\
&=
\frac{\sqrt{2}J}{\kappa^2}\int \Gamma(f) d\pi
\end{align*}
The conclusion follows by the duality fomula \eqref{duality} while taking the supremum when $g$ runs over all bounded continuous function on the left hand side of the last inequality.
\end{proof}

\section{Transport inequalities for Markov chain satisfying CDE'$(\kappa_e,\infty)$}
In this section, we assume that the exponential curvature condition CDE'$(\kappa_e,\infty)$ holds. We will prove Theorem \ref{thmW2I}. But first, we shall study some properties of the CDE'$(\kappa_e,\infty)$ condition.

\subsection{Properties of CDE'$(\kappa_e,\infty)$}
\begin{lem}\label{commutaionP_tsqrt}
Assue that CDE'$(\kappa$, $\infty)$ holds. Then for any nonnegative function $f: \cX \longrightarrow \R$ and any $t \geq 0$, we have
\begin{description}
\item $(i)$ $\Gamma(\sqrt{P_tf}) \leq e^{-2\kappa_e t}P_t\Gamma(\sqrt{f}).$
\item $(ii)$ $\frac{\Gamma(P_tf)}{P_tf} \leq e^{-2\kappa_e t}P_t\left(\frac{\Gamma(f)}{f}\right).$
\end{description}
\end{lem}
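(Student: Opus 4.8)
The plan is to prove both inequalities by the same semigroup-interpolation technique used earlier for Lemma \ref{prop_kkrt}, since both statements assert that a nonlinear functional of $f$ contracts under $P_t$ at rate $e^{-2\kappa_e t}$. The general strategy is as follows: to establish an estimate of the form $\Psi(P_tf) \leq e^{-2\kappa_e t} P_t\Psi(f)$, I fix $t>0$ and introduce the interpolation function
\[
\phi(s) := e^{-2\kappa_e s} P_s\bigl(\Psi(P_{t-s}f)\bigr), \qquad 0 \leq s \leq t,
\]
where $\Psi(f) = \Gamma(\sqrt{f})$ for part $(i)$ and $\Psi(f) = \Gamma(f)/f$ for part $(ii)$. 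Since $\phi(0) = \Psi(P_tf)$ and $\phi(t) = e^{-2\kappa_e t}P_t(\Psi(f))$, it suffices to show $\phi$ is nondecreasing, i.e. $\phi'(s) \geq 0$. Differentiating and using $\frac{d}{ds}P_{t-s}f = -LP_{t-s}f$, the sign of $\phi'(s)$ reduces, after dividing by $e^{-2\kappa_e s}P_s(\cdot)$, to the pointwise inequality
\[
L\Psi(g) - D\Psi(g)[Lg] \geq 2\kappa_e \,\Psi(g)
\]
applied to $g = P_{t-s}f$, where $D\Psi(g)[Lg]$ denotes the derivative of $\Psi$ at $g$ in the direction $Lg$. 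The heart of the matter is to recognize that this differential inequality is exactly the CDE'$(\kappa_e,\infty)$ condition in disguise for each choice of $\Psi$.

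For part $(i)$, the relevant computation is to identify $\frac{1}{2}L\Gamma(\sqrt{g}) - \frac{d}{ds}\big|\Gamma(\sqrt{P_sg})$ with $\tilde\Gamma_2$. Here I would exploit the key identity highlighted in the introduction, namely $2\sqrt{g}\,L\sqrt{g} = Lg - 2\Gamma(\sqrt{g})$, which is the discrete substitute for the chain rule and the whole reason the modified $\Gamma_2$ was introduced. Writing everything in terms of $h = \sqrt{g}$ and carefully tracking how $L$ interacts with $\Gamma(h)$, the bracket $\frac{1}{2}L\Gamma(\sqrt{g}) - \Gamma\!\left(\sqrt{g}, \frac{L g}{2\sqrt g}\right)$ should collapse precisely to $\tilde\Gamma_2(g) = \Gamma_2(g) - \Gamma\!\left(g, \Gamma(g)/g\right)$ once the identity is substituted, so that the hypothesis $\tilde\Gamma_2(g) \geq \kappa_e \Gamma(g)$ (together with $\Gamma(g) \geq$ a suitable multiple of $\Gamma(\sqrt g)$, or an equality recovered via the same identity) yields the desired rate $2\kappa_e$. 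For part $(ii)$, I expect that differentiating $\Gamma(P_sf)/P_sf$ and using $\frac{d}{ds}(\Gamma(P_sf)/P_sf) = (\text{terms involving } L\Gamma \text{ and } \Gamma(\cdot, L\cdot))/P_sf$ produces $\Gamma_2$ and the correction term $\Gamma(f,\Gamma(f)/f)$ directly, so that $\tilde\Gamma_2$ appears even more transparently.

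The main obstacle is the algebra of the first part: unlike the continuous setting, $L$ does not commute with the square root, so the derivative $\frac{d}{ds}\Gamma(\sqrt{P_sf})$ is not simply $\Gamma(\sqrt{P_sf}, L\sqrt{P_sf})$ — one must differentiate $\sqrt{P_sf}$ and correctly account for the discrete chain-rule defect. This is precisely where the identity $2\sqrt{f}L\sqrt{f} = Lf - 2\Gamma(\sqrt{f})$ does the essential work, converting the defect term into the extra piece $\Gamma(f,\Gamma(f)/f)$ that distinguishes $\tilde\Gamma_2$ from $\Gamma_2$. I would therefore spend most of the effort verifying that the time-derivative of the interpolation, once expressed through this identity, is bounded below by exactly $2\kappa_e$ times the contracted quantity, and that no spurious terms of the wrong sign survive. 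Part $(ii)$ is structurally cleaner and should follow the same template with $\Psi(f) = \Gamma(f)/f$ feeding directly into the definition of $\tilde\Gamma_2$; the only care needed there is the positivity of $P_sf$, which holds since $f \geq 0$ and the chain is irreducible.
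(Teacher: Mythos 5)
Your overall strategy is exactly the paper's: the interpolation $\phi(s)=e^{-2\kappa_e s}P_s\bigl(\Psi(P_{t-s}f)\bigr)$, the reduction of $\phi'\geq 0$ to the pointwise inequality $L\Psi(g)-D\Psi(g)[Lg]\geq 2\kappa_e\Psi(g)$ with $g=P_{t-s}f$, and, for part $(ii)$, the identification of the left-hand side with $2\tilde{\Gamma}_2(g)$ via the product-rule identity $L\Gamma(g)=gL\bigl(\Gamma(g)/g\bigr)+\tfrac{\Gamma(g)}{g}Lg+2\Gamma\bigl(g,\Gamma(g)/g\bigr)$. Part $(ii)$ of your sketch is correct as stated and matches the paper's argument.

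In part $(i)$, however, there is a concrete bookkeeping error that would make the argument fail if followed literally. The bracket $\tfrac{1}{2}L\Gamma(\sqrt g)-\Gamma\bigl(\sqrt g,\tfrac{Lg}{2\sqrt g}\bigr)$ does \emph{not} collapse to $\tilde{\Gamma}_2(g)$; it collapses to $\tilde{\Gamma}_2(\sqrt g)$. Indeed, writing $h=\sqrt g$, the identity $L(h^2)=2hLh+2\Gamma(h)$ gives $\tfrac{Lg}{2\sqrt g}=Lh+\tfrac{\Gamma(h)}{h}$, hence by bilinearity $\tfrac12 L\Gamma(h)-\Gamma\bigl(h,\tfrac{L(h^2)}{2h}\bigr)=\Gamma_2(h)-\Gamma\bigl(h,\tfrac{\Gamma(h)}{h}\bigr)=\tilde{\Gamma}_2(h)$. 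The correct step is therefore to apply CDE'$(\kappa_e,\infty)$ to the nonnegative function $\sqrt g$ itself, which bounds the bracket below by $\kappa_e\Gamma(\sqrt g)$ --- precisely the rate your interpolation needs, since $\phi$ involves $\Gamma(\sqrt g)$ and not $\Gamma(g)$. (This equivalent formulation of CDE' is (3.11) in \cite{BHLLMY}, which the paper invokes directly instead of re-deriving it.) Your fallback route --- combining $\tilde{\Gamma}_2(g)\geq\kappa_e\Gamma(g)$ with an inequality of the type $\Gamma(g)\geq c\,\Gamma(\sqrt g)$ --- is a dead end: no such pointwise comparison holds with a universal constant (take $g=\epsilon^2$ at a vertex and $0$ at a neighbour, so that $\Gamma(\sqrt g)\sim\epsilon^2$ while $\Gamma(g)\sim\epsilon^4$), and in any case a lower bound by a multiple of $\Gamma(g)$ rather than $\Gamma(\sqrt g)$ is not what the derivative computation requires. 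Once you replace ``apply CDE' at $g$'' by ``apply CDE' at $\sqrt g$'', your proof of $(i)$ coincides with the paper's.
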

\begin{rque}
$(i)$ looks like the commutation formula of $\Gamma$ and $\sqrt{.}$ under $CD(\kappa,\infty)$ in continuous settings. But it is not the same thing, the positivity is very important. Recall that in classical Bakry-Emery theory, the commutation formula is the following:
 for all $f$(smooth enough), $\sqrt{\Gamma{(P_tf)}}\leq e^{-\kappa t}P_t(\sqrt {\Gamma(f)})$. We have not been able to recover this formula under CDE$(\kappa_e,\infty)$ in graphs settings.
\end{rque}

\begin{proof}
The proof follows a standard interpolation argument. We begin with $(i)$. Let $g:=P_{t-s}f$ and define $\phi(s) := e^{-2\kappa_e s}P_s\left(\Gamma(\sqrt{g})\right)$. To obtain the result, it is enough to show that $\phi' \geq 0$. In fact,
\begin{align*}
\phi'(s) &= e^{-2\kappa_e s}P_s\left[L(\Gamma(\sqrt{g})) - \Gamma\left(\sqrt{g}, \frac{Lg}{\sqrt{g}}\right) - 2\kappa \Gamma(\sqrt{g})\right] \\
&\geq 0,
\end{align*}
where we have used the assumption on the curvature, which is equivalent to
$$\frac{1}{2}L\Gamma(f) - \Gamma\left(f, \frac{L(f^2)}{2f}\right) \geq \kappa_e \Gamma(f)$$
(see (3.11) in \cite{BHLLMY}).

Similarly, let $\psi(s):= e^{-2\kappa_e s}P_s\left(\frac{\Gamma(g)}{g}\right)$. Again, it is enough to show that $\psi'\geq 0$. We have
\begin{equation}
\psi'(s)=e^{-\kappa_e s}P_s\left( L\left(\frac{\Gamma(g)}{g}\right)+\frac{1}{g^2}\left(-2g\Gamma(g,Lg)+\Gamma(g) Lg\right)-2\kappa \frac{\Gamma(g)}{g}\right).
\end{equation}
Since $g$ is positive, we only need to show that
\begin{equation}\label{eq6}
g\left(L\left(\frac{\Gamma(g)}{g}\right)+\frac{1}{g^2}\left(-2g\Gamma(g,Lg)+\Gamma(g) Lg\right)-2\kappa_e \frac{\Gamma(g)}{g}\right)\geq 0.
\end{equation}
Notice that \eqref{eq6} is equivalent to
$$gL\left(\frac{\Gamma(g)}{g}\right)-2\Gamma(g,Lg)+\frac{1}{g}\Gamma(g) Lg\geq 2\kappa_e \Gamma(g),$$
and we conclude by writing
$$ 2\kappa \Gamma(g)\leq 2\tilde{\Gamma}_2(g)=2\left(\Gamma_2(g)-\Gamma(g,\frac{\Gamma(g)}{g})\right)=gL\left(\frac{\Gamma(g)}{g}\right)-2\Gamma(g,Lg)+\frac{1}{g}\Gamma(g) Lg.$$

\end{proof}

\subsection{Weak transport-information inequalities under CDE'$(\kappa,\infty)$}
Using Lemma \ref{commutaionP_tsqrt}, we can prove some weak transport-information inequalities under CDE'$(\kappa_e,\infty)$. First, we will prove Theorem \ref{thmW2I}.

\begin{proof}[Proof of Theorem \ref{thmW2I}]
Let $\alpha(t)=e^{-\kappa_e t}$, for any probability density $f$ with respect to $\pi$ and any $t>0$, applying \eqref{HJalpha}, it holds:
\begin{equation}\label{eq5}
\int_0^{\infty}{\frac{d}{dt}\int{\tQ_{\alpha(t)}g P_tfd\pi}dt} \leq
\int_0^{\infty}{\int{-\frac{\kappa_e}{4}e^{-\kappa_e t}|\widetilde{\nabla} \tQ_{\alpha(t)}g|^2P_tf + \Gamma (\tQ_{\alpha (t)}g,P_t f) d\pi}dt}
\end{equation}
According to part $(iii)$ of Lemma \ref{Estimation of Gamma}, we have
\begin{align*}
\int{\tQ g fd\pi}& - \int{g d\pi} = \int_0^{\infty}{\frac{d}{dt}\int{\tQ_{\alpha(t)}g P_tfd\pi}dt} \\
&\leq
 \int_0^{\infty}{\int{-\frac{\kappa_e}{4}e^{-\kappa_e t}|\widetilde{\nabla} \tQ_{\alpha(t)}g|^2P_tf +2\sqrt{2J} |\widetilde{\nabla}\tQ_{\alpha(t)}g| \sqrt{P_tf\Gamma(\sqrt{P_t f})}d\pi} dt}\\
&\leq
 \int_0^{\infty}{\frac{8Je^{\kappa_e t}}{\kappa_e}\int{\Gamma (\sqrt{P_t f})d\pi}dt}.
\end{align*}
Now we apply Lemma \ref{commutaionP_tsqrt}, and we get
\begin{align*}
\int{\tQ g fd\pi} - \int{g d\pi}&\leq
 \int_0^{\infty}{\frac{8e^{-\kappa_e t}}{\kappa_e}\int{P_t\left(\Gamma (\sqrt{f})\right)d\pi}dt}\\
 &=
 \frac{8J}{\kappa_e^2}\int \Gamma (\sqrt{f})d\pi=\frac{2J}{\kappa_e^2}\I_\pi(f).
\end{align*}

The conclusion then follows from the duality formula \eqref{duality} by taking $\mu=\pi$ and $\nu=f\pi$.
\end{proof}
It is easy to see that $\I_\pi(f) \leq \overline{\I}_\pi(f):=\int \frac{\Gamma(f)}{f}d\pi$, thus we have the following corollary:
\begin{cor}\label{corWI}
Assume that the exponential curvature condition CDE'$(\kappa_e,\infty)$ holds, then $\pi$ satisfies the following weak-transport information inequalities:
$$\tW_2(f\pi|\pi)^2\leq \frac{2J}{\kappa_e^2}\overline{\I}_\pi(f).$$
\end{cor}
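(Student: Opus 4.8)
The plan is to obtain this immediately from Theorem \ref{thmW2I}, which already proves the bound $\tW_2(f\pi|\pi)^2 \leq \frac{2J}{\kappa_e^2}\I_\pi(f)$. The statement of the corollary differs only in that the Fisher information $\I_\pi(f)$ is replaced by the modified Fisher information $\overline{\I}_\pi(f) = \int \frac{\Gamma(f)}{f}d\pi$. Thus it suffices to establish the single comparison $\I_\pi(f) \leq \overline{\I}_\pi(f)$ and then chain it after the conclusion of Theorem \ref{thmW2I}; this comparison was already announced in the introduction, so the corollary is essentially a one-line deduction once it is justified.

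To prove $\I_\pi(f) \leq \overline{\I}_\pi(f)$ I would write both functionals as sums over pairs against the symmetric weight $m(x,y) := K(x,y)\pi(x) = K(y,x)\pi(y)$, the symmetry being exactly the reversibility of $\pi$. Here $\I_\pi(f) = 2\sum_{x,y}(\sqrt{f(y)}-\sqrt{f(x)})^2 m(x,y)$, while $\overline{\I}_\pi(f) = \frac{1}{2}\sum_{x,y}\frac{(f(y)-f(x))^2}{f(x)}m(x,y)$. Symmetrizing the latter over $(x,y)$ and $(y,x)$ turns the weight $1/f(x)$ into $\frac{1}{2}(1/f(x)+1/f(y))$, and then the factorization $f(y)-f(x) = (\sqrt{f(y)}-\sqrt{f(x)})(\sqrt{f(y)}+\sqrt{f(x)})$ reduces the desired inequality, edge by edge, to $(a+b)^2(a^2+b^2) \geq 8a^2b^2$ with $a = \sqrt{f(x)}$ and $b = \sqrt{f(y)}$. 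This last inequality follows at once from the two AM-GM bounds $(a+b)^2 \geq 4ab$ and $a^2+b^2 \geq 2ab$.

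The only point deserving care --- what counts as the ``obstacle'' in an otherwise routine argument --- is that the comparison fails at the level of a single ordered pair: for one oriented edge the ratio of the two summands depends on the sign of $f(y)-f(x)$, so no pointwise bound is available before averaging. It is precisely the symmetrization, legitimate thanks to reversibility, that restores a clean edge-by-edge inequality. With $\I_\pi(f) \leq \overline{\I}_\pi(f)$ in hand, substituting into the estimate of Theorem \ref{thmW2I} yields $\tW_2(f\pi|\pi)^2 \leq \frac{2J}{\kappa_e^2}\overline{\I}_\pi(f)$, as claimed.
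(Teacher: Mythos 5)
Your proposal is correct and follows the paper's own route exactly: the corollary is deduced from Theorem \ref{thmW2I} together with the comparison $\I_\pi(f) \leq \overline{\I}_\pi(f)$, which the paper merely asserts as ``easy to see.'' Your symmetrization-plus-AM-GM proof of that comparison (including the observation that the pointwise, unsymmetrized inequality fails and reversibility is what saves it) is a valid filling-in of the step the paper leaves implicit.
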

Unfortunately, we have not been able to establish the relation of $\tW_2(\pi|f\pi)^2$ and  $\int \Gamma (\sqrt{f})d\pi$. However, as in Corollary \ref{corWI}, we get a weaker inequality as follows:

\begin{thm}\label{W2Ifaible}
Assume that the exponential curvature condition CDE'$(\kappa_e,\infty)$ holds, then $\pi$ satisfies the following weak-transport information inequalities:
$$\tW_2(\pi|f\pi)^2\leq \frac{2J}{\kappa_e^2}\overline{\I}_\pi(f).$$
\end{thm}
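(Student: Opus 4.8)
The plan is to mimic the proof of Theorem \ref{thmW2I}, but with the roles of $\pi$ and $f\pi$ in the duality formula \eqref{duality} interchanged. This swap forces two crucial substitutions: part $(ii)$ of Lemma \ref{Estimation of Gamma} in place of part $(iii)$, and part $(ii)$ of Lemma \ref{commutaionP_tsqrt} in place of part $(i)$. It is precisely these substitutions that replace $\I_\pi(f)$ by the larger quantity $\overline\I_\pi(f)$. Fix a bounded function $g$ and set $\beta(t) := 1 - e^{-\kappa_e t}$, so that $\beta(0)=0$, $\beta(\infty)=1$ and $\beta'(t) = \kappa_e e^{-\kappa_e t} > 0$. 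I would interpolate with
$$H(t) := \int \tQ_{\beta(t)} g\, P_t f\, d\pi.$$
Since $\tQ_0 g = g$ and $P_0 f = f$, while $P_t f \to 1$ as $t\to\infty$ by ergodicity and $\tQ_{\beta(t)}g \to \tQ_1 g$, the endpoints are $H(0) = \int g f\, d\pi$ and $H(\infty) = \int \tQ_1 g\, d\pi$. Hence $\int\tQ_1 g\, d\pi - \int g f\, d\pi = \int_0^\infty H'(t)\, dt$, and by \eqref{duality} (with $\nu = \pi$, $\mu = f\pi$) it suffices to bound this integral by $\frac{2J}{\kappa_e^2}\overline\I_\pi(f)$ uniformly in $g$.

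Differentiating and integrating by parts via $\int u\,Lv\,d\pi = -\int\Gamma(u,v)\,d\pi$, one gets
$$H'(t) = \int \beta'(t)\,\partial_s\tQ_{\beta(t)}g\, P_t f\, d\pi - \int \Gamma(\tQ_{\beta(t)}g, P_t f)\, d\pi,$$
where $\partial_s\tQ_{\beta(t)}g$ denotes the $s$-derivative of $\tQ_s g$ evaluated at $s=\beta(t)$. The Hamilton--Jacobi inequality \eqref{HJ} together with $\beta'(t)>0$ gives $\beta'(t)\,\partial_s\tQ_{\beta(t)}g \leq -\frac{\kappa_e}{4}e^{-\kappa_e t}|\widetilde{\nabla}\tQ_{\beta(t)}g|^2$. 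The key point now is that the $\Gamma$-term appears with the sign \emph{opposite} to the one in Theorem \ref{thmW2I}, so I cannot exploit the negative-part structure underlying part $(iii)$ of Lemma \ref{Estimation of Gamma}. Instead I bound $-\int\Gamma(\tQ_{\beta(t)}g, P_t f)\,d\pi \leq |\int\Gamma(\tQ_{\beta(t)}g, P_t f)\,d\pi|$ and apply part $(ii)$, which controls the absolute value and produces $\sqrt{\Gamma(P_t f)} = \sqrt{P_t f}\,\sqrt{\Gamma(P_t f)/P_t f}$. Writing $u := |\widetilde{\nabla}\tQ_{\beta(t)}g|\sqrt{P_t f}$, the two contributions combine pointwise as $-\frac{\kappa_e}{4}e^{-\kappa_e t}u^2 + \sqrt{2J}\,u\,\sqrt{\Gamma(P_tf)/P_tf}$, and Young's inequality yields $H'(t) \leq \frac{2Je^{\kappa_e t}}{\kappa_e}\int \frac{\Gamma(P_t f)}{P_t f}\,d\pi$.

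Finally I invoke part $(ii)$ of Lemma \ref{commutaionP_tsqrt}, namely $\Gamma(P_t f)/P_t f \leq e^{-2\kappa_e t}P_t(\Gamma(f)/f)$, together with the invariance of $\pi$, to obtain $\int\Gamma(P_tf)/P_tf\,d\pi \leq e^{-2\kappa_e t}\overline\I_\pi(f)$, whence $H'(t) \leq \frac{2J}{\kappa_e}e^{-\kappa_e t}\overline\I_\pi(f)$. Integrating in $t$ produces the factor $1/\kappa_e$ and the bound $\frac{2J}{\kappa_e^2}\overline\I_\pi(f)$; taking the supremum over $g$ in \eqref{duality} then completes the proof. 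The main obstacle is exactly the sign of the $\Gamma$-term, which is forced by the asymmetry of both the weak cost $\tW_2(\pi|f\pi)$ and the one-sided gradient $\widetilde{\nabla}$. This is what prevents the sharper estimate through $\Gamma(\sqrt{P_t f})$ available in Theorem \ref{thmW2I} and obliges one to settle for $\overline\I_\pi$ via the Cauchy--Schwarz-type bound $(ii)$.
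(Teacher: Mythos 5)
Your proposal is correct and takes essentially the same route as the paper's own proof: the same interpolation $t \mapsto \int \tQ_{1-e^{-\kappa_e t}}g\,P_t f\,d\pi$ between $\int gf\,d\pi$ and $\int \tQ_1 g\,d\pi$, the Hamilton--Jacobi bound on the time derivative, part $(ii)$ of Lemma \ref{Estimation of Gamma} to handle the $\Gamma$-term (whose sign indeed rules out the negative-part argument behind part $(iii)$), Young's inequality, and part $(ii)$ of Lemma \ref{commutaionP_tsqrt} with invariance of $\pi$, followed by the duality formula \eqref{duality}. Your accompanying explanation of why the asymmetry forces $\overline{\I}_\pi$ rather than $\I_\pi$ is accurate and matches the paper's (unstated) reason for the weaker constant.
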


\begin{proof}
We prove this theorem in a similar way as the previous one.

Let $\alpha(t):=1-e^{-\kappa t}$
Arguing as in the latter theorem, we get
\begin{align*}
\int{\tQ g d\pi}& - \int{g fd\pi} = \int_0^{\infty}{\frac{d}{dt}\int{\tQ_{\alpha(t)}g P_tfd\pi}dt} \\
&\leq
\int_0^{\infty}{\int{-\frac{\kappa}{4}e^{-\kappa_e t}|\widetilde{\nabla} \tQ_{\alpha(t)}g|^2P_tf - \Gamma (\tQ_{\alpha (t)}g,P_t f) d\pi}dt}
\end{align*}
Now applying part $(ii)$ of Lemma \ref{Estimation of Gamma}, it follows that
\begin{align*}
\int{\tQ g d\pi} &- \int{g fd\pi}
\leq
 \int_0^{\infty}{\int{-\frac{\kappa_e}{4}e^{-\kappa_e t}|\widetilde{\nabla} \tQ_{\alpha(t)}g|^2P_tf +\int |\widetilde{\nabla}\tQ_{\alpha(t)}g| \sqrt{2J\Gamma(P_t f)}d\pi} dt}\\
&\leq
 \int_0^{\infty}{\frac{2Je^{\kappa_e t}}{\kappa_e}\int{\frac{\Gamma (P_t f)}{P_tf}d\pi}dt} \leq
\int_0^{\infty}{\frac{2Je^{-\kappa_e t}}{\kappa_e}\int{P_t\left(\frac{\Gamma (f)}{f}\right)d\pi}dt} \\
& \hspace{1cm}= \frac{2J}{\kappa_e^2}\int{\frac{\Gamma(f)}{f}d\pi}
\end{align*}


\end{proof}

\begin{rque}
$(i)$ One can get Corollary \ref{corWI} by a similar argument:
let $\alpha(t):=e^{-\kappa t}$
\begin{align*}
\int{\tQ g fd\pi}& - \int{g d\pi} = \int_0^{\infty}{\frac{d}{dt}\int{\tQ_{\alpha(t)}g P_tfd\pi}dt} \\
&\leq
 \int_0^{\infty}{\int{-\frac{\kappa}{2}e^{-\kappa_e t}|\widetilde{\nabla} \tQ_{\alpha(t)}g|^2P_tf +\sqrt{2J} |\widetilde{\nabla}\tQ_{\alpha(t)}g| \sqrt{|\Gamma(P_t f)|}d\pi} dt}\\
&\leq
 \int_0^{\infty}{\frac{Je^{\kappa t}}{\kappa_e}\int{\frac{\Gamma (P_t f)}{P_tf}d\pi}dt} \leq
 \frac{J}{\kappa_e^2}\int{\frac{\Gamma (f)}{f}d\pi}dt.
\end{align*}

$(ii)$ Using the notations of \cite{GRST15}, define $\tW_2(f\pi,\pi)^2= \frac{1}{2}(\tW_2^2(f\pi|\pi)+\tW^2(\pi|f\pi))$, denote $\mathcal{P}_2(\cX)$ as the set of the probability measure on $\cX$ which has a finite second moment. Then $(\mathcal{P}_2(\cX),\tW_2(.,.))$ is a metric space, and if $\cX$ satisfies the exponential curvature condition, we have an upper bound for $\tW_2(.,\pi)$ in terms of modified Fisher information. Of course, when we work on a finite space, any probability measure has finite second moment.
\end{rque}

%
%
%
%

\section{Transport-information inequality for Markov chains with positive coarse Ricci curvature}
\label{sect_ollivier}
In this section, we assume the Markov chain has coarse Ricci curvature bounded from below by $\kappa_c$, with respect to the graph distance $d_g$.

As a consequence of the bound on the curvature, note that for any $1$-Lipschitz function $g$, $P_tg$ is $e^{-\kappa_c t}$-Lipschitz.


The problem of proving a transport-entropy inequality for Markov chains with positive coarse Ricci curvature was raised in Problem J in \cite{Oll2010}. It was proved by Eldan, Lee and Lehec \cite{ELL}. The transport-information inequality is a slight improvement of this result. Note that $T_1$ cannot hold in the full generality of the setting of \cite{Oll2009}, since it implies Gaussian concentration, which does not hold for some examples with positive curvature, such as Poisson distributions on $\N$.

The proof of this result will make use of the following lemma:
\begin{lem} \label{lem_transport-l1_d_g}
If the coarse Ricci curvature is bounded from below by $\kappa_c > 0$, then
$$W_1(f\pi, \pi) \leq \frac{1}{\kappa_c}\underset{x \neq y}{\sum} \hspace{1mm} |f(x) - f(y)|K(x,y)\pi(x).$$
\end{lem}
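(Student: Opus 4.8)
The plan is to bound $W_1(f\pi,\pi)$ via Kantorovich--Rubinstein duality and then use the coarse Ricci curvature assumption to run a semigroup interpolation, exactly as in the $CD(\kappa,\infty)$ proofs earlier in the excerpt, but now with the graph distance $d_g$ and its associated $1$-Lipschitz functions. Let $g$ be $1$-Lipschitz for $d_g$, meaning $|g(x)-g(y)|\le d_g(x,y)$, so in particular $|g(x)-g(y)|\le 1$ whenever $K(x,y)>0$. Writing the difference of integrals as a time integral of the semigroup action,
\begin{equation*}
\int g f\,d\pi - \int g\,d\pi = \int_0^{+\infty} -\frac{d}{dt}\int (P_t g)\,f\,d\pi\,dt = -\int_0^{+\infty}\int \Gamma(P_t g,f)\,d\pi\,dt,
\end{equation*}
I would estimate the inner integrand. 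The key input is the remark preceding the lemma: the coarse Ricci bound $\kappa_c$ guarantees that $P_t g$ is $e^{-\kappa_c t}$-Lipschitz for $d_g$, so $|P_t g(y)-P_t g(x)|\le e^{-\kappa_c t}$ whenever $x\sim y$.

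The second step is to bound $|\Gamma(P_t g,f)|$ pointwise-integrated by the $L^1$ gradient quantity appearing on the right-hand side. Expanding,
\begin{equation*}
\Big|\int \Gamma(P_t g,f)\,d\pi\Big| = \frac12\Big|\sum_{x,y}(P_t g(y)-P_t g(x))(f(y)-f(x))K(x,y)\pi(x)\Big| \le \frac{e^{-\kappa_c t}}{2}\sum_{x,y}|f(y)-f(x)|K(x,y)\pi(x),
\end{equation*}
using the Lipschitz bound $|P_t g(y)-P_t g(x)|\le e^{-\kappa_c t}$ on the support of $K$. Integrating the factor $e^{-\kappa_c t}$ over $t\in[0,+\infty)$ yields $1/\kappa_c$, and by reversibility of $\pi$ the sum $\frac12\sum_{x,y}|f(y)-f(x)|K(x,y)\pi(x)$ equals $\sum_{x\neq y}|f(x)-f(y)|K(x,y)\pi(x)$ up to the symmetrization bookkeeping, matching the right-hand side of the lemma. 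Taking the supremum over all $1$-Lipschitz $g$ then gives the claimed inequality.

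The main obstacle I anticipate is justifying the Lipschitz-contraction step cleanly in terms of the graph distance rather than $d_\Gamma$. In the $CD(\kappa,\infty)$ arguments one had the clean sub-commutation $\Gamma(P_t g)\le e^{-2\kappa t}P_t\Gamma(g)$ (Lemma \ref{prop_kkrt}), which directly controls $\sqrt{\Gamma(P_t g)}$; here we instead only know contractivity of the discrete Lipschitz seminorm for $d_g$, so I must make sure the bound $|P_t g(y)-P_t g(x)|\le e^{-\kappa_c t}$ for neighbors $x\sim y$ follows correctly from $W_1$-contraction. This is where one invokes that $P_t g$ being $e^{-\kappa_c t}$-Lipschitz for $d_g$ is equivalent to $W_1(P_t^*\delta_x,P_t^*\delta_y)\le e^{-\kappa_c t}d_g(x,y)$ via duality, applied to neighbors where $d_g(x,y)=1$. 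A secondary (purely technical) point is the interchange of the time integral, the infinite sum, and the derivative, plus convergence of $\int(P_t g)f\,d\pi$ to $\int g\,d\pi$ as $t\to\infty$; these require integrability control but are routine given the uniform bounds available, especially on a finite state space.
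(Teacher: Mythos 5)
Your proposal is correct and follows essentially the same route as the paper: Kantorovich--Rubinstein duality, semigroup interpolation writing the difference of integrals as $\int_0^{+\infty}\int\Gamma(P_tg,f)\,d\pi\,dt$, the pointwise bound via the $e^{-\kappa_c t}$-Lipschitz contraction of $P_t$ on $1$-Lipschitz functions for $d_g$, and integration in time to produce $1/\kappa_c$. The only blemishes are harmless: a sign slip in the identity (in fact $-\frac{d}{dt}\int P_tg\,f\,d\pi = +\int\Gamma(P_tg,f)\,d\pi$, which your subsequent absolute-value bound absorbs), and your retained factor $\tfrac12$ actually yields the stronger constant $\tfrac{1}{2\kappa_c}$ rather than ``cancelling by reversibility'' as you suggest.
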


\begin{proof}
By Kantorovitch duality for $W_1$, we have
\begin{align*}
W_1(f\pi, \pi) &= \underset{g 1-lip}{\sup} \hspace{1mm} \int{gfd\pi} - \int{gd\pi} = \underset{g 1-lip}{\sup} \hspace{1mm} -\int_0^{+\infty}{\frac{d}{dt}\int{P_tg f d\pi}dt} \\
&= -\int_0^{+\infty}{\underset{x,y}{\sum} \hspace{1mm} (P_tf(y) - P_tg(x))(f(y) - f(x))K(x,y)\pi(x)dt} \\
&\leq \int_0^{+\infty}{||P_tg||_{lip}\underset{x,y}{\sum} \hspace{1mm} |f(y) - f(x)|K(x,y)\pi(x)dt} \\
&\leq \frac{1}{\kappa_c}\underset{x \neq y}{\sum} \hspace{1mm} |f(x) - f(y)|K(x,y)\pi(x).
\end{align*}
\end{proof}

We can now prove Theorem \ref{thmWIollivier}:
\begin{proof}[Proof of Theorem \ref{thmWIollivier}]

Observe that
\begin{align*}
&\sum_{x \neq y}\left(\sqrt{f}(x) + \sqrt{f}(y)\right)^2K(x,y)\pi(x)\\
&=\sum_{x \neq y}\left(2f(x)+2f(y)-\left(\sqrt{f}(x) - \sqrt{f}(y)\right)^2\right)K(x,y)\pi(x)\\
&\leq \sum_{x \neq y}(2f(x)+2f(y))K(x,y)\pi(x)-\sum_{x \neq y}\left(\sqrt{f}(x) - \sqrt{f}(y)\right)^2K(x,y)\pi(x)\\
&\leq 4J-\frac{1}{2}\I_\pi(f)
\end{align*}

Now using Lemma \ref{lem_transport-l1_d_g}, we have
\begin{align*}
W_1(f\pi, \pi) &\leq \frac{1}{\kappa_c}\underset{x \neq y}{\sum} \hspace{1mm} |f(x) - f(y)|K(x,y)\pi(x) \\
&= \frac{1}{\kappa_c}\underset{x \neq y}{\sum} \hspace{1mm} |\sqrt{f}(x) - \sqrt{f}(y)|\left(\sqrt{f}(x) + \sqrt{f}(y)\right)K(x,y)\pi(x) \\
&\leq \frac{1}{\kappa_c}\sqrt{\I_{\pi}(f)}\sqrt{\frac{1}{4}\underset{x \neq y}{\sum} \hspace{1mm} \left(\sqrt{f}(x) + \sqrt{f}(y)\right)^2K(x,y)\pi(x)} \\
&\leq \frac{1}{\kappa_c}\sqrt{\I_{\pi}(f)}\sqrt{J-\frac{1}{8}\I_\pi(f)}.
\end{align*}
\end{proof}

\section{Applications}
\subsection{Transport-information inequalities implies transport-entropy inequalities}
 We prove here the discrete version of Theorem 2.1 in \cite{GLWW2}. The proof is essentially unchanged, we give it to justify the validity of the theorem in the discrete setting.
\begin{thm}
Assume that the transport-information inequality
$$W_1(f\pi, \pi)^2 \leq \frac{1}{C^2}\mathcal{I}_{\pi}(f)$$
holds. Then we have the transport-entropy inequality
$$W_1(f\pi, \pi)^2 \leq \frac{2}{C}\Ent_{\pi}(f).$$
\end{thm}

\begin{proof}
The transport-entropy inequality $T_1(C)$ is equivalent to the estimate
$$\int{e^{\lambda f}d\pi} \leq \exp\left(\frac{\lambda^2}{2C}\right)$$
for all $1$-Lipschitz function $f$ with $\int{fd\pi} = 0$ and all $\lambda \geq 0$. Let $f$ be such a function. Let $Z(\lambda) := \int{e^{\lambda f}d\pi}$ and $\mu_{\lambda} := e^{\lambda f}\pi/Z(\lambda)$. We have
\begin{equation*}
\frac{d}{d\lambda}\log Z(\lambda) = \frac{1}{Z(\lambda)}\int{fe^{\lambda f}d\pi} \leq W_{1,d}(\mu_{\lambda}, \pi) \leq  \sqrt{\frac{4}{C^2}\int{\frac{\Gamma(e^{\lambda f/2})}{Z(\lambda)}d\pi}}.
\end{equation*}
Using the inequality $\int{\Gamma(f)d\pi} \leq \int{f^2\Gamma(\log f)d\pi}$, we deduce
$$\frac{d}{d\lambda} \log Z(\lambda) \leq \frac{\lambda}{C}$$
which integrates into $\log Z(\lambda) \leq \lambda^2/(2C)$, and this is the bound we were looking for.
\end{proof}

We shall now show that the weak transport-information inequality $\widetilde{T}^+_2I$ implies the weak-transport-entropy inequality $\widetilde{T}_2^+H$. The proof is an adaptation of the one for the $T_2$ and $T_2I$ inequalities in the continuous setting from \cite{GLWW2}.

\begin{thm}
Assume that $\pi$ satisfies the modified weak-transport information inequality $\widetilde{T}_2^+I(C)$, then $\pi$ satisfies the weak-transport inequality $\widetilde{T}_2^+H(C)$.
\end{thm}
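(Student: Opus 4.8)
The plan is to mirror the proof of the preceding theorem (the one showing $T_1I(C) \Rightarrow T_1(C)$), adapting the Herbst-type argument to the weak transport setting. The statement to prove is that $\widetilde{T}_2^+I(C)$ implies $\widetilde{T}_2^+H(C)$, i.e. that $\tW_2(f\pi|\pi)^2 \leq \frac{2}{C}\Ent_\pi(f)$ follows from $\tW_2(f\pi|\pi)^2 \leq \frac{1}{C^2}\I_\pi(f)$. The natural strategy is to use the dual (Bobkov--G\"otze / Gozlan type) characterization of the weak transport-entropy inequality in terms of the infimum-convolution operator $\tQ_1$ and the Hamilton--Jacobi inequality \eqref{HJ}, rather than trying to compare the two inequalities directly.

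First I would recall the dual formulation of $\widetilde{T}_2^+H(C)$: by the duality formula \eqref{duality} for the weak transport cost, the inequality $\tW_2(f\pi|\pi)^2 \leq \frac{2}{C}\Ent_\pi(f)$ for all densities $f$ is equivalent to an exponential integrability statement of the form
\[
\int e^{\frac{C}{2}\tQ_1 g}\,d\pi \leq \exp\left(\frac{C}{2}\int g\,d\pi\right)
\]
for all bounded $g$ (this is the weak-transport analogue of the Bobkov--G\"otze reformulation of $T_1$ used in the previous theorem). So the target reduces to proving such a Laplace-transform bound. I would then introduce, for a fixed bounded $g$ and $\lambda \geq 0$, the function $Z(\lambda) := \int e^{\lambda \tQ_1 g}\,d\pi$ together with the tilted probability measures $\mu_\lambda := e^{\lambda \tQ_1 g}\pi / Z(\lambda)$, exactly paralleling the Herbst argument above.

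The key computation is to differentiate $\log Z(\lambda)$ and bound its derivative. One finds
\[
\frac{d}{d\lambda}\log Z(\lambda) = \frac{1}{Z(\lambda)}\int \tQ_1 g\, e^{\lambda \tQ_1 g}\,d\pi,
\]
and the plan is to control this by the weak transport cost $\tW_2(\mu_\lambda | \pi)$, then apply the hypothesis $\widetilde{T}_2^+I(C)$ to replace the transport cost with the Fisher information $\I_\pi(e^{\lambda \tQ_1 g})$. At this point one invokes the Hamilton--Jacobi inequality \eqref{HJ}, which gives $\frac{\partial}{\partial t}\tQ_t g + \frac14|\widetilde\nabla \tQ_t g|^2 \leq 0$, to turn the gradient term $|\widetilde\nabla \tQ_1 g|$ appearing in $\I_\pi$ (via part $(iv)$ of Lemma~\ref{Estimation of Gamma}, which bounds $\int \Gamma(\sqrt{h})d\pi$ by $\frac{J}{4}\int|\widetilde\nabla \log h|^2 h\,d\pi$) into a time derivative of $\tQ_t g$. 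Combining these should yield a differential inequality of the shape $\frac{d}{d\lambda}\log Z(\lambda) \leq \frac{\lambda}{C} + \int g\,d\pi$ (up to the correct constants), which integrates to the desired Laplace bound and hence, by duality, to $\widetilde{T}_2^+H(C)$.

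The main obstacle I anticipate is bookkeeping the interplay between the $\lambda$-differentiation (the Herbst loop) and the $t$-evolution of $\tQ_t g$ governed by \eqref{HJ}: in the classical $T_2/T_2I$ argument of \cite{GLWW2} one must carefully match the scaling so that the gradient of the infimum-convolution, the Fisher information, and the Hamilton--Jacobi semigroup all fit together to produce exactly the factor $\lambda/C$. In the discrete weak-transport setting the chain rule fails, so the estimate $\int \Gamma(\sqrt{h})\,d\pi \leq \frac{J}{4}\int |\widetilde\nabla \log h|^2 h\,d\pi$ from Lemma~\ref{Estimation of Gamma}$(iv)$ is what substitutes for it, and I expect the delicate point is verifying that the constant $J$ does not degrade the final constant $C$ — i.e. ensuring the $\I_\pi$ appearing in $\widetilde{T}_2^+I(C)$ is controlled by the $\widetilde\nabla$-gradient with precisely the right factor so that everything closes. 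Tracking this constant, and justifying the differentiation under the integral sign for $\tQ_1 g$ (which is only a sup/inf-convolution, not smooth), will require the convexity of $t\mapsto \tQ_t g$ from Proposition~\ref{convexity of Q} and some care.
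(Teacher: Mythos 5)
Your proposal assembles the right ingredients (the dual formulation from \cite{GRST15}, tilted measures, the hypothesis $\widetilde{T}_2^+I(C)$, the Hamilton--Jacobi inequality \eqref{HJ}, and Lemma \ref{Estimation of Gamma} $(iv)$), but the architecture has a genuine gap that prevents the argument from closing: you freeze the infimum-convolution at time $1$ and tilt by the \emph{fixed} function $\tQ_1 g$, differentiating $Z(\lambda)=\int e^{\lambda\tQ_1 g}d\pi$ in $\lambda$ only. Following your chain of estimates (duality \eqref{duality}, then $\widetilde{T}_2^+I(C)$, then Lemma \ref{Estimation of Gamma} $(iv)$ applied to the density $h_\lambda:=e^{\lambda\tQ_1 g}/Z(\lambda)$, for which $\widetilde{\nabla}\log h_\lambda=\lambda\widetilde{\nabla}\tQ_1 g$) gives
$$\frac{d}{d\lambda}\log Z(\lambda)\leq \int g\,d\pi+\tW_2(\mu_\lambda|\pi)^2\leq \int g\,d\pi+\frac{1}{C^2}\I_\pi(h_\lambda)\leq \int g\,d\pi+\frac{J\lambda^2}{C^2}\int|\widetilde{\nabla}\tQ_1 g|^2\,d\mu_\lambda,$$
and there is nothing to absorb the last term. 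In the $T_1I\Rightarrow T_1$ proof the analogous factor is $\leq 1$ because $g$ is $1$-Lipschitz; here $g$ is an arbitrary bounded function, so $|\widetilde{\nabla}\tQ_1 g|$ has no useful uniform bound. Invoking \eqref{HJ} at $t=1$, as you suggest, rewrites $|\widetilde{\nabla}\tQ_1 g|^2\leq -4\,\partial_t\tQ_t g|_{t=1}$, but since your measure $\mu_\lambda$ does not evolve in $t$, this time derivative is never reabsorbed by anything, and convexity of $t\mapsto\tQ_t g$ (Proposition \ref{convexity of Q}) only bounds it by oscillations of $g$. Worse, even the differential inequality you aim for, $\frac{d}{d\lambda}\log Z\leq\frac{\lambda}{C}+\int g\,d\pi$, would be the wrong shape: it integrates to $\log Z(\lambda)\leq\frac{\lambda^2}{2C}+\lambda\int g\,d\pi$, a Gaussian bound with quadratic slack, which is dual to a $T_1$-type inequality. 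The dual form of $\widetilde{T}_2^+H(C)$ is the \emph{exact} Laplace bound $\int e^{c\,\tQ_1 g}\,d\pi\leq\exp\left(c\int g\,d\pi\right)$ (for the appropriate constant $c$), which tolerates no such slack.

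The missing idea is to run the Herbst tilt and the Hamilton--Jacobi time \emph{simultaneously}; this is a structural feature of the proof, not bookkeeping. The paper sets $F(t):=\log\int \exp\left(k(t)\tQ_t f\right)d\pi-k(t)\int f\,d\pi$ with $k(t)=Ct$, and shows $F'(t)\leq 0$, which integrates to $F(1)\leq F(0)=0$, i.e.\ exactly the dual form. Differentiating in $t$ produces two competing terms: the tilt term $k'(t)\int\tQ_t f\,d\mu_t$, which via the scaling identity $t\tQ_t f=\tQ_1(tf)$, the duality formula \eqref{duality}, the hypothesis $\widetilde{T}_2^+I(C)$ and Lemma \ref{Estimation of Gamma} $(iv)$ is controlled by $k'(t)\int f\,d\pi$ plus a positive multiple of $\int|\widetilde{\nabla}k(t)\tQ_t f|^2\,d\mu_t$; and the evolution term $k(t)\int\partial_t\tQ_t f\,d\mu_t$, which by \eqref{HJ} contributes a \emph{negative} multiple of the same gradient quantity. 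The choice $k(t)=Ct$ makes these two contributions cancel exactly, yielding $F'(t)\leq 0$. It is precisely this coupling --- the exponent $k(t)\tQ_t f$ flowing under the Hamilton--Jacobi evolution while it is being tilted --- that supplies the compensating negative gradient term your decoupled $(\lambda,\,t=1)$ parametrization lacks.
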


\begin{proof}
According to \cite{GRST15}, the transport-entropy inequality $\widetilde{T}_2$ is equivalent to
$$\int{\exp\left(\frac{2}{C}\tQ_1 f\right)d\pi} \leq \exp\left(\frac{2}{C}\int{fd\pi}\right) \hspace{3mm} \forall f : \cX \longrightarrow \R \hspace{2mm} \text{bounded}.$$
Usually, the class of functions $f$ we must use is the class of bounded continuous functions, but here, since we work on a discrete space endowed with a graph distance, we only have to work with bounded functions.

We write $F(t) := \log \int{\exp(k(t)\tQ_t f)d\pi} -k(t)\int{fd\pi}$ with $k(t) :=C t $. Let $\mu_t$ be the probability measure with density with respect to $\pi$ proportional to $\exp(k(t)\tQ_t f)$.

According to part (iv) of Lemma \ref{Estimation of Gamma}, we have
$$\int{\Gamma(e^{\frac{1}{2}(k(t)\tQ_tf)})d\pi}\leq \int{|\widetilde{\nabla}k(t)\tQ_tf|^2d\mu_t}$$
Hence, for $t > 0$, we have
\begin{align*}
F'(t) &\leq \frac{1}{\int{\exp(k(t)\tQ_t f)d\pi}}\left(\int{k'(t)\tQ_t f e^{k(t)\tQ_tf}d\pi} - \int{k(t)|\widetilde{\nabla}\tQ_t f|^2 e^{k(t)\tQ_tf}d\pi}\right) - k'(t)\int{fd\pi} \\
&\leq \frac{k'(t)}{t}\left(\int{\tQ_1(tf)d\mu_t} - \int{tfd\pi}\right) - k(t)\int{|\widetilde{\nabla}\tQ_tf|^2d\mu_t} \\
&\leq \frac{k'(t)}{t}\tW_2(\mu_t, \pi)^2 - k(t)\int{|\widetilde{\nabla}\tQ_tf|^2d\mu_t} \\
&\leq \frac{k'(t)}{tC^2}\int{\Gamma(e^{\frac{1}{2}(k(t)\tQ_tf)})d\pi} - k(t)\int{|\widetilde{\nabla}\tQ_tf|^2d\mu_t} \\
&\leq \left(\frac{k'(t)}{tC^2}-\frac{1}{k(t)}\right)\int{|\widetilde{\nabla}k(t)\tQ_tf|^2d\mu_t}\\
&= 0
\end{align*}
\end{proof}

\subsection{Transport-information inequalities imply diameter bounds}
We now show that transport-information inequalities imply diameter bounds, in the spirit of the $L^1$ Bonnet-Meyer theorem of \cite{Oll2009}.
\begin{thm}[Diameter estimate]
Assume that the transport-information inequality
$$W_{1,d}(f\pi, \pi)^2 \leq \frac{1}{C^2}\mathcal{I}_{\pi}(f)$$
holds, for some distance $d$. Then
$$\sup_{x,y\in \cX}d(x,y)\leq \frac{2}{C}\left(\sqrt{J(x)}+\sqrt{J(y)}\right) $$
\end{thm}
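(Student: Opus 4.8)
The plan is to exploit the hypothesised transport-information inequality by feeding it a carefully chosen test density $f$ and reading off a bound on $W_{1,d}$ from a lower bound on the distance combined with an upper bound on the Fisher information. Fix two points $x,y\in\cX$ realising (or nearly realising) the supremum of $d(x,y)$. The natural test function to separate them is an indicator-type density: take $f$ proportional to $\mathbf 1_{\{x\}}/\pi(x)$, i.e. the density of the Dirac mass $\delta_x$ with respect to $\pi$, or more robustly a density supported near $x$. With such an $f$, the measure $f\pi$ concentrates at $x$, so by the dual (Kantorovich) description of $W_{1,d}$, using the $1$-Lipschitz function $z\mapsto d(z,y)$, one gets a lower bound on $W_{1,d}(f\pi,\pi)$ of the order of $d(x,y)$ minus the "average distance to $y$ under $\pi$'' contribution; the cleanest route is to note that $W_{1,d}(\delta_x,\pi)\geq d(x,y)-W_{1,d}(\delta_y,\pi)$ and to play the two endpoints $x,y$ symmetrically.

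Next I would compute the Fisher information of the chosen density. For $f=\delta_x/\pi(x)$ (so $\sqrt f(x)=1/\sqrt{\pi(x)}$ and $\sqrt f=0$ elsewhere) the formula
\[
\I_\pi(f)=2\sum_{u,v}(\sqrt{f(v)}-\sqrt{f(u)})^2K(u,v)\pi(u)
\]
collapses to a sum over edges incident to $x$, giving $\I_\pi(\delta_x/\pi(x))=2\sum_{v}\tfrac{1}{\pi(x)}K(x,v)\pi(x)\cdot(\text{contributions})$, which after using reversibility and $J(x)=1-K(x,x)$ evaluates to a quantity controlled by $J(x)$; concretely one expects something like $\I_\pi=4J(x)$ for the pure Dirac, matching the a priori bound $\I_\pi(f)\leq 4J$ noted earlier in the excerpt. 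Plugging the distance lower bound and this Fisher information into $W_{1,d}(f\pi,\pi)^2\leq C^{-2}\I_\pi(f)$ yields an inequality relating $d(x,y)$ to $\sqrt{J(x)}$, and the symmetric choice $f=\delta_y/\pi(y)$ yields the companion inequality with $\sqrt{J(y)}$.

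The final step is to combine the two endpoint estimates. Writing $R:=W_{1,d}(\delta_x,\pi)$ and $S:=W_{1,d}(\delta_y,\pi)$, the triangle inequality in $W_{1,d}$ gives $d(x,y)=W_{1,d}(\delta_x,\delta_y)\leq R+S$, while the transport-information inequality bounds $R\leq \tfrac{1}{C}\sqrt{\I_\pi(\delta_x/\pi(x))}\leq \tfrac{2}{C}\sqrt{J(x)}$ and likewise $S\leq \tfrac{2}{C}\sqrt{J(y)}$. Adding these produces exactly
\[
d(x,y)\leq \frac{2}{C}\left(\sqrt{J(x)}+\sqrt{J(y)}\right),
\]
and taking the supremum over $x,y$ gives the stated diameter bound.

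I expect the main obstacle to be the rigorous treatment of the Dirac density: $\delta_x/\pi(x)$ is an admissible density with respect to $\pi$, but one must verify that the Fisher information computation is exactly $4J(x)$ (rather than merely $\leq 4J$) and, more delicately, that the Kantorovich lower bound $W_{1,d}(\delta_x,\pi)$ can be converted into a clean single-point term $\tfrac{2}{C}\sqrt{J(x)}$ rather than a $\pi$-averaged expression. The cleanest way to sidestep any averaging issue is to apply the inequality directly to $f=\delta_x/\pi(x)$ and use $W_{1,d}(\delta_x,\delta_y)\leq W_{1,d}(\delta_x,\pi)+W_{1,d}(\pi,\delta_y)$, so that each term is individually controlled by the Fisher information of a single Dirac; the symmetry of the two endpoints then furnishes the final additive form without ever needing a two-sided estimate.
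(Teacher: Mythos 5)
Your proposal is correct and follows essentially the same route as the paper: apply the $T_1I$ inequality to the Dirac densities $f_x=\mathbf 1_{\{x\}}/\pi(x)$ and $f_y=\mathbf 1_{\{y\}}/\pi(y)$, compute $\I_\pi(f_x)=4J(x)$ (your computation, using reversibility, indeed gives exactly $4J(x)$, matching the paper's bound $\int\Gamma(\sqrt{f_x})\,d\pi\leq J(x)$), and conclude via the triangle inequality $d(x,y)=W_{1,d}(\delta_x,\delta_y)\leq W_{1,d}(\delta_x,\pi)+W_{1,d}(\pi,\delta_y)$. The duality-based lower bound you sketch in your first paragraph is an unnecessary detour, but your final argument correctly discards it in favor of the triangle inequality, which is precisely the paper's proof.
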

\begin{proof}
Let $f_x$  be the density function correspond to the Dirac mass $\delta_x$: $f_x:=\frac{1_{x}}{\pi(x)}$.
\begin{align*}
\int \Gamma(\sqrt{f_x})d\pi
&=
\frac{1}{2}\left(\sum_{z\sim x}(\sqrt{f_x(x)})^2k(x,z)\pi(x)+\sum_{z\sim x}(\sqrt{f_x(x)})^2k(z,x)\pi(z)\right)\\
&=
\sum_{z\sim x}(\sqrt{f_x(x)})^2k(x,z)\pi(x)
\leq J(x)
\end{align*}
Thus, for all $x\in \cX$, $\I_\pi(f_x)\leq 4J(x)$, it follows that $W_1(f_x\pi,\pi)^2\leq \frac{4}{C^2}J(x) $.
As a consequence, for all $x,y\in \cX$, it holds
$$d(x,y)=W_1(\delta_x,\delta_y)\leq W_1(f_x\pi,\pi)+W_1(f_y\pi,\pi)\leq \frac{2}{C}\left(\sqrt{J(x)}+\sqrt{J(y)}\right).$$
\end{proof}
According to Corollary \ref{corW2ICD}, we have the following corollary:
\begin{cor}
Assume that $CD(\kappa,\infty)$ holds, then for all $x,y\in \cX$, $$d_{\Gamma}(x,y)\kappa \leq 2\sqrt{2}\left(\sqrt{J(x)}+\sqrt{J(y)}\right).$$
\end{cor}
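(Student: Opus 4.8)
The plan is to read this off as a direct instance of the preceding Diameter estimate theorem, applied to the distance $d = d_\Gamma$. The only input needed is a transport-information inequality for $W_{1,d_\Gamma}$ with an explicit constant, and this is precisely what Theorem \ref{CDW1I} provides under $CD(\kappa,\infty)$: for every probability density $f$ with respect to $\pi$,
$$W_{1,d_\Gamma}(f\pi,\pi)^2 \leq \frac{2}{\kappa^2}\I_\pi(f).$$
(Note that although the statement is phrased as ``according to Corollary \ref{corW2ICD}'', that corollary is stated for the graph distance $d_g$; the $d_\Gamma$-estimate we need here is the content of Theorem \ref{CDW1I}. The argument is identical either way, only the distance and the resulting constant change.)

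First I would match this inequality against the hypothesis of the Diameter estimate theorem, which reads $W_{1,d}(f\pi,\pi)^2 \leq \frac{1}{C^2}\I_\pi(f)$. Identifying $1/C^2 = 2/\kappa^2$ gives $C = \kappa/\sqrt{2}$. The proof of the Diameter estimate theorem is distance-agnostic: it applies the transport-information inequality to the Dirac densities $f_x = \mathbf{1}_x/\pi(x)$, uses the bound $\I_\pi(f_x) \leq 4J(x)$ together with the triangle inequality $d(x,y) = W_1(\delta_x,\delta_y) \leq W_1(f_x\pi,\pi) + W_1(f_y\pi,\pi)$, and none of these steps depends on the particular choice of $d$. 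Hence the theorem applies verbatim with $d = d_\Gamma$.

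Feeding $C = \kappa/\sqrt{2}$ into the conclusion $d(x,y) \leq \frac{2}{C}\left(\sqrt{J(x)}+\sqrt{J(y)}\right)$ yields, for all $x,y \in \cX$,
$$d_\Gamma(x,y) \leq \frac{2\sqrt{2}}{\kappa}\left(\sqrt{J(x)}+\sqrt{J(y)}\right),$$
and multiplying through by $\kappa$ gives exactly $d_\Gamma(x,y)\kappa \leq 2\sqrt{2}\left(\sqrt{J(x)}+\sqrt{J(y)}\right)$. There is essentially no obstacle here; the entire content is the constant bookkeeping $1/C^2 = 2/\kappa^2 \Rightarrow 2/C = 2\sqrt{2}/\kappa$, and the only point worth flagging is to invoke the correct ($d_\Gamma$) transport-information inequality from Theorem \ref{CDW1I} rather than the $d_g$ version.
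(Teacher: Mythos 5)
Your proof is correct and is essentially the paper's own argument: the corollary is exactly the Diameter estimate theorem applied with $d=d_\Gamma$ and $C=\kappa/\sqrt{2}$ coming from Theorem~\ref{CDW1I}, and your constant bookkeeping $2/C = 2\sqrt{2}/\kappa$ matches the stated bound. You are also right that the paper's pointer to Corollary~\ref{corW2ICD} is a slip, since that corollary concerns $d_g$; the $d_\Gamma$ transport-information inequality needed here is indeed Theorem~\ref{CDW1I}.
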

Together with \eqref{d_and_d_gamma2}, we can recover Corollary\ref{dia_estimate}.

If we look at the example of the discrete hypercube of dimension $N$ (see the next subsection), with our notations it satisfies CD$(1/N,\infty)$. The above theorem gives the correct bound on the diameter for the graph distance of $N$.

\subsection{An example: the discrete hypercube}
As an example of Markov chain satisfying CDE'$(\kappa$, $\infty$), we study the example of the symmetric random walk on the discrete hypercube. It is a Markov chain on $\{0,1\}^N$, which at rate $1$ selects a coordinate uniformly at random, and flips it with probability $1/2$. The transition rates are $K(x,y) = 1/(2N)$ for $x,y$ such that $d_g(x,y) = 1$, and else it is $0$.

\begin{thm}
The symmetric random walk on the discrete hypercube satisfies CDE'$(1/N$, $\infty)$
\end{thm}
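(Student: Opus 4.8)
The plan is to verify the CDE'$(1/N,\infty)$ condition directly by computing the relevant operators on the hypercube and exploiting its product structure. The key observation is that the symmetric random walk on $\{0,1\}^N$ is a tensor product of $N$ copies of the two-point random walk, so I would first reduce the computation to understanding the action of the generator and the $\Gamma$ and $\Gamma_2$ operators coordinate-by-coordinate. Concretely, for $x\in\{0,1\}^N$ write $x^{(i)}$ for the point obtained by flipping the $i$-th coordinate. Then $Lf(x)=\frac{1}{2N}\sum_{i=1}^N(f(x^{(i)})-f(x))$, and $\Gamma(f)(x)=\frac{1}{4N}\sum_{i=1}^N(f(x^{(i)})-f(x))^2$. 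The goal is to show that for every nonnegative $f$ and every $x$, the modified operator $\tilde\Gamma_2(f)(x)=\Gamma_2(f)(x)-\Gamma\!\left(f,\frac{\Gamma(f)}{f}\right)(x)$ satisfies $\tilde\Gamma_2(f)(x)\geq \frac{1}{N}\Gamma(f)(x)$.

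First I would set up convenient notation for the discrete differences. For a fixed base point $x$, let $a_i:=f(x^{(i)})-f(x)$ be the $N$ first-order differences, and introduce the second-order quantities $f(x^{(ij)})$ (flipping coordinates $i$ and $j$) that appear when one applies $L$ twice. Expanding $\Gamma_2=\frac12 L\Gamma(f)-\Gamma(f,Lf)$ in these terms produces a sum of local contributions, and the crucial simplification on the hypercube is that flipping two distinct coordinates commutes, so the ''mixed'' second-order terms organize cleanly. I would then use the identity quoted in the excerpt, namely that the CDE'$(\kappa_e,\infty)$ condition is equivalent to
$$\frac{1}{2}L\Gamma(f)-\Gamma\!\left(f,\frac{L(f^2)}{2f}\right)\geq \kappa_e\,\Gamma(f),$$
(see (3.11) in \cite{BHLLMY}), which reformulates $\tilde\Gamma_2$ in a form where the troublesome $\Gamma(f)/f$ term is absorbed into a single expression involving $f$ and $f^2$. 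This reformulation is what makes the positivity tractable, because it avoids differentiating the quotient $\Gamma(f)/f$ explicitly.

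The heart of the argument is then a pointwise algebraic inequality. After the reduction, the claim becomes an inequality among the values $f(x)$, the neighbor values $f(x^{(i)})$, and the two-flip values $f(x^{(ij)})$, with the factor $1/N$ coming from the normalization $K=1/(2N)$. I expect that the diagonal terms (where the same coordinate is flipped twice, returning to $x$) produce exactly a $\frac{1}{N}\Gamma(f)$ contribution, which accounts for the curvature lower bound, while the off-diagonal terms ($i\neq j$) must be shown to be nonnegative. The main obstacle is precisely controlling these off-diagonal cross terms: because the chain rule fails, $\Gamma(f,\Gamma(f)/f)$ does not simplify as in the diffusion case, and one must verify by hand that the remaining expression is a nonnegative combination — most likely by recognizing it as a sum of squares or by invoking the elementary inequality $2\sqrt{uv}L\sqrt{uv}\leq \ldots$ underlying the $\sqrt{\,\cdot\,}$ identity highlighted in the introduction. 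I would handle the two-point base case first (where the inequality reduces to a one-variable convexity statement for $\Phi(s)=\sqrt{s}$), and then lift it to dimension $N$ using the tensorization of CDE'$(\kappa_e,\infty)$ established in \cite{BLLY}, which immediately yields that $N$ independent copies each satisfying CDE'$(1,\infty)$ combine to give CDE'$(1/N,\infty)$ after accounting for the rate normalization $K(x,y)=1/(2N)$.
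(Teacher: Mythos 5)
Your proposal is correct and, in its final form, is exactly the paper's proof: verify CDE'$(1,\infty)$ by direct computation on the two-point space, then invoke the tensorization result (Proposition 3.3 of \cite{BLLY}) and rescale the generator by $1/N$ to account for the normalization \eqref{assumpt_markov}. The intermediate sketch of a direct expansion into diagonal and off-diagonal second-difference terms is never needed (and is the hard route you rightly abandon), since the tensorization argument bypasses it entirely.
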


\begin{proof}
We start with the case $N = 1$. Since then we only have to consider a Markov chain on a two-points space, we can easily do explicit computations. Fix $f : \{0,1\} \longrightarrow \R$. We have
$$\Gamma(f)(0) = \Gamma(f)(1) = \frac{1}{4}(f(0)-f(1))^2$$
and hence $\Gamma\left(f, \frac{\Gamma(f)}{f}\right) = 0$ and
$$\tilde{G}_2(f) = \Gamma_2(f) = -\Gamma(f, Lf) = \Gamma(f).$$
Therefore when $N = 1$, the Markov chain satisfies CDE'$(1$, $\infty)$.

The general case follows, using a tensorization argument. In the unnormalized case, using Proposition 3.3 of \cite{BLLY}, the graph satisfies CDE'$(1$, $\infty)$ independently of $N$. Since we consider the case of a Markov chain and enforce \eqref{assumpt_markov}, we rescale the generator by a factor $1/N$ (so that there is on average one jump by unit of time), and therefore it satisfies CDE'$(1/N$, $\infty)$.
\end{proof}

\begin{rque}
We have shown that for the two-point space, the exponential curvature and the curvature are the same, and equal to $1$. In \cite{KKRT14}, it is stated that the curvature is $2$. The difference is because, since we enforced the normalization condition \eqref{assumpt_markov}, the definitions of $L$ in the two frameworks differ by a factor $2$.
\end{rque}

\section*{Acknowledgement}
M.F. thanks Matthias Erbar, Ivan Gentil and Prasad Tetali for discussions on this topic. This work was partly done during a stay at the Hausdorff Research Institute for Mathematics in Bonn, whose support is gratefully acknowledged. 

Y.S. thanks his PhD advisors Natha\"el Gozlan and Cyril Roberto for helpful advice and remarks.

\end{document}